\newtheorem{theorem}{Theorem}[section]
\newtheorem{conjecture}[theorem]{Conjecture}
\newtheorem{corollary}[theorem]{Corollary}
\newtheorem{lemma}[theorem]{Lemma}
\newtheorem{remark}[theorem]{Remark}
\begin{document}

\title{$k$-colored kernels in semicomplete multipartite digraphs}

\author[unam]{Hortensia Galeana--S\'{a}nchez}
\ead{hgaleana@matem.unam.mx}

\author[uam]{Bernardo Llano\corref{cor1}}
\ead{llano@xanum.uam.mx}

\author[unam]{Juan Jos\'{e} Montellano--Ballesteros}
\ead{juancho@matem.unam.mx}

\address[unam]{Instituto de Matem\'{a}ticas, UNAM, Ciudad Universitaria, 04510, M\'{e}xico, D. F.}

\address[uam]{Departamento de Matem\'{a}ticas, Universidad Aut\'{o}noma Metropolitana, Iztapalapa, San Rafael Atlixco 186,
               Colonia Vicentina, 09340, M\'{e}xico, D.F.}

\cortext[cor1]{Corresponding author.}


\begin{abstract}

An $m$-colored digraph $D$ has $k$-colored kernel if there exists a
subset $K $ of its vertices such that for every vertex $v\notin K$
there exists an at
most $k$-colored directed path from $v$ to a vertex of $K$ and for every $%
u,v\in K$ there does not exist an at most $k$-colored directed path
between them. In this paper we prove that an $m$-colored
semicomplete $r$-partite digraph $D$ has a $k$-colored kernel
provided that $r\geq 3$ and

\begin{enumerate}
\item[(i)] $k\geq 4,$

\item[(ii)] $k=3$ and every $\overrightarrow{C}_{4}$ contained in $D$ is at
most $2$-colored and, either every $\overrightarrow{C}_{5}$
contained in $D$ is at most $3$-colored or every
$\overrightarrow{C}_{3}\uparrow \overrightarrow{C}_{3}$ contained in
$D$ is at most $2$-colored,

\item[(iii)] $k=2$ and every $\overrightarrow{C}_{3}$ and $\overrightarrow{C}%
_{4}$ contained in $D$ is monochromatic.
\end{enumerate}

If $D$ is an $m$-colored semicomplete bipartite digraph and $k=2$
(resp. $k=3 $) and every $\overrightarrow{C}_{4}\upuparrows
\overrightarrow{C}_{4}$
contained in $D$ is at most $2$-colored (resp. $3$-colored), then $D$ has a $%
2$-colored (resp. $3$-colored) kernel. Using these and previous
results, we obtain conditions for the existence of $k$-colored
kernels in $m$-colored semicomplete $r$-partite digraphs for every
$k\geq 2$ and $r\geq 2$.

\end{abstract}

\begin{keyword}

$m$-colored digraph, $k$-colored kernel, semicomplete multipartite
digraph

\MSC 05C20

\end{keyword}

\maketitle

\section{Introduction}

Let $m,$ $j$ and $k$ positive integers. A digraph $D$ is said to be $m$%
\textit{-colored} if the arcs of $D$ are colored with $m$ colors. Given $%
u,v\in V(D),$ a directed path from $u$ to $v$ of $D,$ denoted by
$u\leadsto
v,$ is $j$\textit{-colored }if all its arcs\textit{\ }use exactly\textit{\ }$%
j$ colors and it is represented by $u\leadsto _{j}v.$ When $j=1,$
the
directed path is said to be \textit{monochromatic. }A nonempty set $%
S\subseteq V(D)$ is a $k$\textit{-colored} \textit{absorbent set }if
for every vertex $u\in V(D)-S$ there exists $v\in S$ such that
$u\leadsto _{j}v$
with $1\leq j\leq k.$ A nonempty set $S\subseteq V(D)$ is a called a $k$%
\textit{-colored independent} \textit{set }if for every $u,v\in S$
there
does not exist $u\leadsto _{j}v$ with $1\leq j\leq k.$ Let $D$ be an $m$%
-colored digraph. A set $K\subseteq V(D)$ is called a
$k$\textit{-colored kernel }if $K$ is a $k$-colored absorbent and
independent set. This definition was introduced in \cite{MCh}, where
the first basic results were proved. We observe that a $1$-colored
kernel is a kernel by monochromatic directed paths, a notion that
has widely studied in the literature, see for
instance \cite{GS1}, \cite{GS2}, \cite{GS-Ll-MB2}, \cite{GS-RM1}, \cite%
{GS-RM2}, \cite{SSW}, \cite{Ming} and \cite{Wloch}.

An arc $(u,v)\in A(D)$ is \textit{asymmetric }(resp. \textit{symmetric}) if $%
(v,u)\notin A(D)$ (resp. $(v,u)\in A(D)$). We denote by $\overrightarrow{C}%
_{n}$ the directed cycle of length $n.$ A semicomplete $r$-partite
digraph $D $ with $r\geq 2$ is an orientation of an $r$-partite
complete graph in which
symmetric arcs are allowed. A digraph $D$ is called $3$-\textit{%
quasi-transitive }if whenever distinct vertices
$u_{0},u_{1},u_{2},u_{3}\in V(D)$ such that $u_{0}\longrightarrow
u_{1}\longrightarrow u_{2}\longrightarrow u_{3}$ there exists at
least $(u_{0},u_{3})\in A(D)$ or
$(u_{3},u_{0})\in A(D).$ In particular, bipartite semicomplete digraphs are $%
3$-quasi-transitive$.$

Let $D^{\prime }$ a subdigraph of an $m$-colored digraph $D.$ We say that $%
D^{\prime }$ is \textit{monochromatic }if every arc of $D^{\prime }$
is
colored with the same color and $D^{\prime }$ is \textit{at most }$k$\textit{%
-colored }if the arcs of $D^{\prime }$ are colored with at most $k$
colors. In this paper, we particularly use subdigraphs of
semicomplete $r$-partite
digraph which are at most $2$- and $3$-colored. We defined the digraphs $%
\overrightarrow{C}_{3}\uparrow \overrightarrow{C}_{3}$ (resp. $%
\overrightarrow{C}_{4}\upuparrows \overrightarrow{C}_{4}$) as two
directed cycles $\overrightarrow{C}_{3}$ (resp.
$\overrightarrow{C}_{4}$) joined by an arc (resp. by two consecutive
arcs), see the next picture.

\begin{figure}[h]
\begin{center}
\begin{tikzpicture}[line width=0.8pt]
    \shade[ball color=black] (0,1)  circle (.10cm);
    \shade[ball color=black] (-1,0) circle (.10cm);
    \shade[ball color=black] (0,-1) circle (.10cm);
    \shade[ball color=black] (1,0)  circle (.10cm);
    \draw[-stealth](-0.8,0.2)--(-0.2,0.8);
    \draw[-stealth](0.2,0.8)--(0.8,0.2);
    \draw[-stealth](-0.8,-0.2)--(-0.2,-0.8);
    \draw[-stealth](0.2,-0.8)--(0.8,-0.2);
    \draw[stealth-](-.75,0)--(0.75,0);
    \shade[ball color=black] (4,1) circle (.10cm);
    \shade[ball color=black] (3,0) circle (.10cm);
    \shade[ball color=black] (4,-1)  circle (.10cm);
    \shade[ball color=black] (5,0) circle (.10cm);
    \shade[ball color=black] (4,0) circle (.10cm);
    \draw[stealth-](3.2,0.2)--(3.8,0.8);
    \draw[stealth-](3.2,-0.2)--(3.8,-0.8);
    \draw[stealth-](4.2,0.8)--(4.8,0.2);
    \draw[stealth-](4.2,-0.8)--(4.8,-0.2);
    \draw[-stealth](3.25,0)--(3.75,0);
    \draw[-stealth](4.25,0)--(4.75,0);
\end{tikzpicture}
\end{center}

\caption{$\overrightarrow{C}_{3}\uparrow \overrightarrow{C}_{3}$ and
$\overrightarrow{C}_{4}\upuparrows \overrightarrow{C}_{4}$,
respectively.}

\label{figf}
\end{figure}

The goal of this work is to complete the study of the existence of
$k$-colored kernels in semicomplete $r$-partite digraphs for every
$k \geq 2$. The problem for $1$-colored kernels in bipartite
tourrnaments was studied in \cite{GS-RM1}. In that paper, the
authors proved that a if every $\overrightarrow{C}_{4}$ contained in
an $m$-colored bipartite tournament $T$ is monochromatic, then $T$
has a $1 $-colored kernel. Let $r\geq 3.$ In \cite{GS-RM2}, it was
proved that if every $\overrightarrow{C}_{3}$ and
$\overrightarrow{C}_{4}$ contained in a $r $-partite tournament $T$
is monochromatic then $T$ has a $1$-colored kernel. In
\cite{GS-Ll-MB} among other results, we showed that $m$-colored
quasi-transitive and $3$-quasi-transitive digraphs have a $k
$-colored kernel for every $k\geq 3$ and $k\geq 4,$ respectively. As
a consequence, $m$-colored semicomplete bipartite digraphs have a
$k$-colored kernel for every $k\geq 3$ and $k\geq 4,$ respectively.

In this paper we prove that an $m$-colored semicomplete $r$-partite
digraph $D$ has a $k$-colored kernel provided that $r\geq 3$ and

\begin{enumerate}
\item[(i)] $k\geq 4,$

\item[(ii)] $k=3$ and every $\overrightarrow{C}_{4}$ contained in $D$ is at
most $2$-colored and, either every $\overrightarrow{C}_{5}$
contained in $D$ is at most $3$-colored or every
$\overrightarrow{C}_{3}\uparrow \overrightarrow{C}_{3}$ contained in
$D$ is at most $2$-colored,

\item[(iii)] $k=2$ and every $\overrightarrow{C}_{3}$ and $\overrightarrow{C}%
_{4}$ contained in $D$ is monochromatic.
\end{enumerate}

If $D$ is an $m$-colored semicomplete bipartite digraph and $k=2$
(resp. $k=3 $) and every $\overrightarrow{C}_{4}\upuparrows
\overrightarrow{C}_{4}$
contained in $D$ is at most $2$-colored (resp. $3$-colored), then $D$ has a $%
2$-colored (resp. $3$-colored) kernel. Using these and previous
results, we obtain conditions for the existence of $k$-colored
kernels in $m$-colored semicomplete $r$-partite digraphs for every
$k\geq 2$ and $r\geq 2$ (see Corollary \ref{final-cor}). If we are
restricted to the family of the $m$-colored multipartite
tournaments, then we have conditions for the existence of
$k$-colored kernels for every $k\geq 1$ and $r\geq 2$ using the main
results of this paper and those obtained in \cite{GS-RM1} and
\cite{GS-RM2} (see Corollary \ref{final-cor2}).

We finish this introduction including some simple definitions and a
well-known result that will be useful in proving the main re sults.

Let $D$ be a digraph and $x,y\in V(D).$ The \textit{distance from}
$x$ \textit{to} $y,$ denoted by $d(x,y)$ is the minimum length
(number of arcs) of a $x\leadsto y.$

Recall that a \textit{kernel }$K$ of $D$ is an independent set of
vertices so that for every $u\in V(D)\setminus K$ there exists
$(u,v)\in A(D),$ where $v\in K.$ We say that a digraph $D$ is
\textit{kernel-perfect }if every nonempty induced subdigraph of $D$
has a kernel.

Given an $m$-colored digraph $D,$ we define the $k$\textit{-colored
closure}
of $D,$ denoted by $\mathfrak{C}_{k}(D),$ as the digraph such that $V(%
\mathfrak{C}_{k}(D))=V(D)$ and%
\[
A(\mathfrak{C}_{k}(D))=\{(u,v):\exists \,u\leadsto _{j}v,1\leq j\leq
k\}.
\]

\begin{remark}
Observe that every $m$-colored digraph $D$ has a $k$-colored kernel
if and only if $\mathfrak{C}_{k}(D)$ has a kernel. \label{closure}
\end{remark}

We will use the following theorem of P. Duchet \cite{Duchet}.

\begin{theorem}
If every directed cycle of a digraph $D$ has a symmetric arc, then
$D$ is kernel-perfect. \label{Duchet}
\end{theorem}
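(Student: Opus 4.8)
The plan is to prove the statement directly by induction on $|V(D)|$, after two easy reductions. First, the hypothesis ``every directed cycle has a symmetric arc'' is hereditary: if $H$ is an induced subdigraph of $D$, every directed cycle of $H$ is also a directed cycle of $D$, and the symmetric arc witnessing it lies, with both of its orientations, inside $H$ since $H$ is induced. Consequently it suffices to prove that \emph{every} digraph satisfying the hypothesis has a kernel; applying this to every nonempty induced subdigraph then yields kernel-perfection. So from now on $D$ satisfies the hypothesis and we must produce a kernel.

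The engine of the induction is the following observation. Let $D^{-}$ be the spanning subdigraph of $D$ obtained by keeping only the \emph{asymmetric} arcs, that is, deleting every arc $(x,y)$ for which $(y,x)$ is also an arc. Every directed cycle of $D^{-}$ would be a directed cycle of $D$ all of whose arcs are asymmetric, which the hypothesis forbids; hence $D^{-}$ is acyclic. A nonempty finite acyclic digraph has a vertex of out-degree $0$, so choose such a vertex $t$ of $D^{-}$. By construction every arc of $D$ leaving $t$ is symmetric, that is, every out-neighbour of $t$ in $D$ is also an in-neighbour of $t$. Now delete from $D$ the vertex $t$ together with all in-neighbours of $t$; by the previous sentence this also removes every out-neighbour of $t$. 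Call the resulting induced subdigraph $D'$. If $D'$ is empty then every vertex of $D$ other than $t$ is an in-neighbour of $t$, so $\{t\}$ is a kernel of $D$ and we are done. Otherwise $D'$ is nonempty, still satisfies the hypothesis and has fewer vertices, so by induction it has a kernel $K'$, and I claim $K:=\{t\}\cup K'$ is a kernel of $D$.

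The verification is routine. For independence, no arc joins $t$ to $K'$ in either direction, since every in-neighbour and every out-neighbour of $t$ was deleted before $K'$ was chosen; and $K'$, being independent in the induced subdigraph $D'$, is independent in $D$. For absorption, a vertex $u\notin K$ is either an in-neighbour of $t$, and hence absorbed by $t$, or a vertex of $D'$ outside $K'$, and hence absorbed by some vertex of $K'$ through an arc of $D'$, which is also an arc of $D$. This closes the induction, and the two reductions of the first paragraph complete the proof.

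The argument is short, and the hypothesis is used at exactly one point: to guarantee that $D^{-}$ is acyclic. Everything afterwards is the standard ``peel off a locally absorbing vertex and recurse'' bookkeeping, so the one step that really deserves care is the reduction itself---recognizing that erasing the symmetric arcs converts the cycle condition into plain acyclicity and thereby produces the vertex $t$ whose closed in-neighbourhood we can safely absorb. (Equivalently, $\{t\}$ is a \emph{semikernel} of $D$, and the induction above is just an unpacking of the theorem that a digraph all of whose induced subdigraphs possess semikernels is kernel-perfect.)
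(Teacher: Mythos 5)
Your argument is correct, but note that the paper does not prove this statement at all: it is quoted as a known result of Duchet \cite{Duchet}, so there is no ``paper proof'' to match against. What you have written is a sound, self-contained proof of the cited theorem. The hereditary reduction is fine (a cycle of an induced subdigraph is a cycle of $D$, and the reversed arc survives in an induced subdigraph), the key step is fine (a cycle of the asymmetric part $D^{-}$ would be a cycle of $D$ with no symmetric arc, so $D^{-}$ is acyclic and, being finite, has a sink $t$, whence every out-neighbour of $t$ in $D$ is also an in-neighbour), and the verification that $\{t\}\cup K'$ is independent and absorbent goes through because the closed in-neighbourhood of $t$ was deleted before $K'$ was chosen. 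As you remark yourself, this is essentially the classical route: $\{t\}$ is a semikernel, and your induction is an unpacking of the Neumann-Lara criterion that a digraph all of whose induced subdigraphs have nonempty semikernels is kernel-perfect; Duchet's original argument is of the same nature. So your proposal does not diverge from a paper argument (there is none), but it does make the paper more self-contained, at the cost of a page of routine bookkeeping; the only implicit hypothesis you rely on is finiteness of $D$, which is the standing assumption here.
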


The symbol $\bigtriangleup $ will be used to denote the end of a
claim or a subclaim. We follow \cite{BJ-G} for the general
terminology on digraphs.

\section{Preliminary results}

We set $r\geq 3$ for the rest of the paper. We denote by $\mathcal{A},%
\mathcal{B},\mathcal{C},\ldots $ the partite sets of a semicomplete
multipartite digraph $D.$

\begin{lemma}
Let $D$ be an $m$-colored semicomplete $r$-partite digraph and
$x,y\in V(D).$
If there exists $x\leadsto _{k}y$ with $k\geq 4$ and there does not exist $%
y\leadsto _{k^{\prime }}x$ with $k^{\prime }\leq 4,$ then $d(x,y)\leq 2.$ \label%
{lemma-k-ge-4}
\end{lemma}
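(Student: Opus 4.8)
The plan is to argue by contradiction: suppose $d(x,y)\geq 3$. Fix a shortest path $P=x_0x_1\cdots x_\ell$ from $x=x_0$ to $y=x_\ell$ with $\ell\geq 3$; being shortest, it is an induced path, so consecutive pairs lie in distinct partite sets and there are no ``chords'' in the forward direction that would shorten it. Since $D$ is semicomplete $r$-partite, for any two vertices of $P$ in different partite sets there is at least one arc between them; because $P$ is geodesic, every such arc that is present must point ``backward'' (from a later vertex of $P$ to an earlier one), i.e.\ if $x_i$ and $x_j$ with $i+2\leq j$ lie in different parts then $(x_j,x_i)\in A(D)$. The key first step is therefore to extract, from these backward arcs together with the existing arc $x\leadsto_k y$ along $P$, a short directed path from $y$ back to $x$, and to bound the number of colors it uses by $4$, contradicting the hypothesis that no $y\leadsto_{k'}x$ with $k'\leq 4$ exists.

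Concretely, I would examine the first few vertices $x_0,x_1,x_2,x_3$ of $P$. The pairs $\{x_0,x_2\}$, $\{x_0,x_3\}$, $\{x_1,x_3\}$ are candidates to lie in distinct partite sets; one analyzes which of $x_0,\dots,x_3$ can share a part. Because $x_0x_1x_2$ and $x_1x_2x_3$ are geodesics, $x_0\neq_{\text{part}} x_1$, $x_1\neq_{\text{part}} x_2$, $x_2\neq_{\text{part}} x_3$, and $x_0\neq_{\text{part}} x_2$, $x_1\neq_{\text{part}} x_3$ (else a chord shortens $P$). The only possible coincidence among the first four is $x_0$ and $x_3$ in the same part. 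If $x_0$ and $x_3$ lie in different parts, then $(x_3,x_0)\in A(D)$, giving $y\leadsto x_3\to x_0=x$ along the geodesic segment $x_3x_4\cdots x_\ell$ reversed—wait, that runs the wrong way; instead I use that $x_3\cdots x_\ell = y$ forces me to look from $y$ backward. So the cleaner move: since $P$ is geodesic and $\ell\geq 3$, consider $y=x_\ell$ and the vertex $x_{\ell-3}$; the same local analysis shows $x_{\ell-3}$ and $x_\ell$ are either in the same part or joined by $(x_\ell,x_{\ell-3})=(y,x_{\ell-3})\in A(D)$. Iterating this ``backward jump of length $3$'' and, where a same-part obstruction occurs, a jump of length $2$ or $4$ via a nearby differing part (using $r\geq 3$ to guarantee a third part is available so that not all of a short window collapses into two parts), I build a $y\leadsto x$ path. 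The color bound comes from noting the original path $x\leadsto_k y$ contributes its colors, but the backward path reuses arcs of $D$; I must show at most $4$ colors appear on the $y\leadsto x$ walk—here is where I would use $r\geq 3$ and a careful case split on how the partite classes are distributed along the initial/terminal segment of $P$, keeping the number of distinct arcs on the return path to at most four.

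The main obstacle I anticipate is precisely the color count on the return path: merely producing a \emph{short} $y\leadsto x$ path is easy from geodesic+semicomplete considerations, but ensuring it uses $\leq 4$ colors requires controlling which arcs of $D$ get used, and the hypothesis only says $x\leadsto_k y$ exists with $k\geq 4$ (so up to $k$ colors appear going forward, which is no help backward). I expect the real argument must instead show $d(x,y)\leq 2$ directly by locating a single intermediate vertex: take the geodesic $P$, and if $\ell\geq 3$, find a vertex $z$ (one of $x_2,x_3$, or a vertex in a third partite class) such that both $(x,z)$ or $x\to z$ and $(z,y)$ hold via arcs or length-$\leq 2$ paths whose colors, combined, still force a short $y\leadsto x$ path using few colors—contradiction. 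The delicate point is the interaction of the "no $y\leadsto_{k'}x$ for $k'\leq 4$" hypothesis with the at-most-$4$-color budget: every backward option we construct must be shown to cost $\leq 4$ colors, and I would organize the proof as a short list of claims, each ruling out one configuration of the first four vertices of $P$, concluding $\ell\leq 2$.
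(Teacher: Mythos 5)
Your plan contains a genuine gap, and in fact the obstacle you flag as the main difficulty is not an obstacle at all. The hypothesis is that there is \emph{no} $y\leadsto_{k'}x$ with $k'\leq 4$; since any directed path with at most $4$ arcs uses at most $4$ colors, this hypothesis automatically forbids \emph{every} $y\leadsto x$ path of length at most $4$, regardless of which arcs it uses. So there is no ``color budget'' to control on the return path: one only has to exhibit a $y\leadsto x$ path with at most four arcs and the contradiction is immediate. This is exactly how the paper argues. First, if $x$ and $y$ lie in different parts, the absence of any short $y\leadsto x$ path (in particular of the arc $(y,x)$) forces $(x,y)\in A(D)$, so $d(x,y)=1$; you omit this reduction. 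Then, with $x,y$ in the same part $\mathcal{A}$ and a shortest path $x\to x_1\to x_2\to\cdots\to x_t\to y$ of length at least $3$, one gets $(y,x_1)\in A(D)$, and either $x_2\notin\mathcal{A}$, giving the return path $y\to x_1\to x_2\to x$ of length $3$, or $x_2\in\mathcal{A}$, which forces $t\geq 3$, $x_3\notin\mathcal{A}$, $(x_3,x)\in A(D)$, and the return path $y\to x_1\to x_2\to x_3\to x$ of length $4$. Either way the hypothesis is contradicted.

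Beyond missing that key observation, your structural claims about the geodesic are partly wrong: you assert that $x_0$ and $x_2$ (and $x_1$ and $x_3$) must lie in different parts ``else a chord shortens $P$,'' but vertices in the same part have no arc between them at all, so same-part pairs at distance two along a geodesic are perfectly possible --- indeed the case $x_2\in\mathcal{A}$ (same part as $x$) is the crux of the actual proof, and only a \emph{forward} chord between different-part vertices would shorten $P$; backward chords are harmless and are precisely what one uses. Your appeal to $r\geq 3$ to ``guarantee a third part'' is likewise unnecessary: the argument never needs a third part. Finally, your text is an exploratory plan with acknowledged unresolved cases (``I expect the real argument must instead\ldots''), not a completed proof, so as it stands it does not establish the lemma.
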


\begin{proof}
Suppose that $x\in \mathcal{A}$ and $y\in \mathcal{B}.$ Since there
does not exist $y\leadsto _{k^{\prime }}x,$ we have that $(x,y)\in
A(D).$ So, we
assume that $x,y\in \mathcal{A}$ and by contradiction, suppose that $%
d(x,y)\geq 3.$ Consider the directed path of minimum length
\[
x\longrightarrow x_{1}\longrightarrow x_{2}\longrightarrow \cdots
\longrightarrow x_{t}\longrightarrow y\text{\quad }(t\geq 2).
\]%
Therefore $x_{1}\in \mathcal{B}$ (with $\mathcal{B}\neq
\mathcal{A}$) and then $(y,x_{1})\in A(D).$ If $x_{2}\notin
\mathcal{A}$, then $(x_{2},x)\in A(D)$ (the arc $(x,x_{2})$ implies
a shorter path from $x$ to $y).$ In this case, the directed path
$y\longrightarrow x_{1}\longrightarrow x_{2}\longrightarrow x$ is a
$y\leadsto _{k^{\prime }}x$ with $k^{\prime }\leq 3,$ a
contradiction (there does not exist $y\leadsto _{k^{\prime }}x$ with
$k^{\prime }\leq 4).$ Hence $x_{2}\in \mathcal{A}$ and $t\geq 3$,
since $x_{t}\notin \mathcal{A}$ and $(x_{t},y)\in A(D).$ Recalling
that $x_{2}\in
\mathcal{A}$, we get that $x_{3}\notin \mathcal{A}$ and there exists $%
(x_{3},x)\in A(D)$ (the arc $(x,x_{3})$ implies a shorter path from $x$ to $%
y).$ We obtain that the directed path
\[
y\longrightarrow x_{1}\longrightarrow x_{2}\longrightarrow
x_{3}\longrightarrow x
\]%
is a $y\leadsto _{k^{\prime }}x$ with $k^{\prime }\leq 4,$ a
contradiction to the supposition of the lemma.
\end{proof}

\begin{lemma}
Let $D$ be an $m$-colored semicomplete $r$-partite digraph, $k=2$
(resp. $k=3 $) and $x,y\in V(D).$ If there exists $x\leadsto _{k}y$
and there does not exist $y\leadsto _{k^{\prime }}x$ with $k^{\prime
}\leq 2$ (resp. $k^{\prime }\leq 3$), then $d(x,y)\leq 4.$
\label{less-eq-4}
\end{lemma}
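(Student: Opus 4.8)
The plan is to argue by contradiction, in the same spirit as the proof of Lemma~\ref{lemma-k-ge-4}, the simplification being that here it suffices to produce a return path from $y$ to $x$ of length~$2$. First I would dispose of the trivial case: if $x$ and $y$ lie in different partite sets, then since $D$ is semicomplete $r$-partite there is an arc between them; the arc $(y,x)$ is impossible, as it would be a $y\leadsto_{1}x$ contradicting the hypothesis, so $(x,y)\in A(D)$ and $d(x,y)=1\leq 4$. Hence I may assume $x,y\in\mathcal{A}$ for some partite set $\mathcal{A}$, and suppose for contradiction that $n:=d(x,y)\geq 5$; fix a shortest directed path $x=x_{0}\rightarrow x_{1}\rightarrow\cdots\rightarrow x_{n}=y$.

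The key step is the usual ``no shortcut'' observation: whenever $0\leq a<b\leq n$ with $b-a\geq 2$ and $x_{a},x_{b}$ lie in different partite sets, the arc between them must be $(x_{b},x_{a})$, since $(x_{a},x_{b})$ would give the strictly shorter path $x_{0}\rightarrow\cdots\rightarrow x_{a}\rightarrow x_{b}\rightarrow\cdots\rightarrow x_{n}$. Now $x_{2}\rightarrow x_{3}$ is an arc, so $x_{2}$ and $x_{3}$ lie in different partite sets, and hence at least one of them, say $x_{j}$ with $j\in\{2,3\}$, is not in $\mathcal{A}$. Because $n\geq 5$ we have $2\leq j\leq n-2$, so the observation applies to the pairs $(0,j)$ and $(j,n)$ --- each has gap at least $2$ and has exactly one endpoint in $\mathcal{A}$ --- giving $(x_{j},x_{0})\in A(D)$ and $(x_{n},x_{j})\in A(D)$. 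Thus $y=x_{n}\rightarrow x_{j}\rightarrow x_{0}=x$; since $x_{j}$ is internal to a shortest path it is distinct from $x$ and $y$, so this is a directed path from $y$ to $x$ of length~$2$, hence a $y\leadsto_{k'}x$ with $k'\leq 2$ --- which is excluded by the hypothesis in both the $k=2$ and the $k=3$ case. Therefore $d(x,y)\leq 4$.

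I do not anticipate a genuine obstacle: the whole argument is shortest-path bookkeeping, and a single length-$2$ return path settles $k=2$ and $k=3$ at once precisely because a two-arc path uses at most two colours, which both hypotheses forbid. The only points needing a moment's care are that in a semicomplete $r$-partite digraph two vertices in distinct parts are always joined by an arc (invoked twice above), and that the intermediate vertex $x_{j}$ is truly distinct from the two endpoints, which is automatic for a vertex internal to a shortest path of length at least $5$.
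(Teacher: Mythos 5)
Your proof is correct and follows essentially the same route as the paper's: a shortest path of length at least $5$, the no-shortcut observation, and a length-$2$ return path $y\rightarrow x_{j}\rightarrow x$ through whichever of $x_{2},x_{3}$ lies outside $\mathcal{A}$ (the paper phrases this as the case split ``$x_{2}\notin\mathcal{A}$, else $x_{3}\notin\mathcal{A}$''). Your uniform choice of $j\in\{2,3\}$ and the remark that a two-arc path settles $k=2$ and $k=3$ simultaneously are only presentational tidier versions of the same argument.
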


\begin{proof}
Suppose that $x\in \mathcal{A}$ and $y\in \mathcal{B}.$ Since there
does not exist $y\leadsto _{k^{\prime }}x,$ we have that $(x,y)\in
A(D).$ So, we
assume that $x,y\in \mathcal{A}$ and by contradiction, suppose that $%
d(x,y)\geq 5.$ Consider the directed path of minimum length
\[
x\longrightarrow x_{1}\longrightarrow x_{2}\longrightarrow \cdots
\longrightarrow x_{t}\longrightarrow y\text{\quad }(t\geq 4).
\]%
If $x_{2}\notin \mathcal{A}$, then $(y,x_{2})\in A(D)$ and
$(x_{2},x)\in A(D) $ (observe that $(x_{2},y)\in A(D)$ or
$(x,x_{2})\in A(D)$ implies a shorter path from $x$ to $y).$
Therefore the directed path $y\longrightarrow x_{2}\longrightarrow
x$ is a $y\leadsto _{k^{\prime }}x$ with $k^{\prime }\leq 2,$ a
contradiction to the supposition of the lemma. Hence $x_{2}\in
\mathcal{A}$ and so $x_{3}\notin A.$ In a similar way as done before, $%
(y,x_{3})\in A(D)$ and $(x_{3},x)\in A(D).$ It follows that the
directed path $y\longrightarrow x_{3}\longrightarrow x$ is a
$y\leadsto _{k^{\prime }}x$ with $k^{\prime }\leq 2,$ a
contradiction to the supposition of the lemma. The proof for $k=3$
follows analogously.
\end{proof}

\begin{lemma}
Let $D$ be an $m$-colored semicomplete $r$-partite digraph such that every $%
\overrightarrow{C}_{4}$ contained in $D$ is at most $2$-colored, $k=3$ and $%
x,y\in V(D).$ If there exists $x\leadsto _{k}y$ and there does not exist $%
y\leadsto _{k^{\prime }}x$ with $k^{\prime }\leq 3$, then $d(x,y)\leq 2.$ %
\label{lemma-k-eq-3}
\end{lemma}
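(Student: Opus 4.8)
The plan is to reuse the scheme of Lemmas \ref{lemma-k-ge-4} and \ref{less-eq-4}: first handle the case in which $x$ and $y$ lie in different partite sets, then assume $x,y$ are in the same part, argue by contradiction on a shortest $x\leadsto y$, and exhibit a directed path from $y$ to $x$ using at most $3$ colors.

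First I would note that if $x\in\mathcal{A}$ and $y\in\mathcal{B}$ with $\mathcal{A}\neq\mathcal{B}$, then there is an arc between $x$ and $y$; since there is no $y\leadsto_{k'}x$ with $k'\le 3$ we have $(y,x)\notin A(D)$, hence $(x,y)\in A(D)$ and $d(x,y)=1$. So assume $x,y\in\mathcal{A}$ and, for contradiction, $d(x,y)\ge 3$, and fix a shortest path $x\longrightarrow x_{1}\longrightarrow\cdots\longrightarrow x_{t}\longrightarrow y$ with $t\ge 2$. Since $x\in\mathcal{A}$ we get $x_{1}\notin\mathcal{A}$, so there is an arc between $y$ and $x_{1}$, and minimality rules out $(x_{1},y)$ (it would give the path $x\to x_{1}\to y$), so $(y,x_{1})\in A(D)$.

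Then I would split on whether $x_{2}\in\mathcal{A}$. If $x_{2}\notin\mathcal{A}$, minimality rules out $(x,x_{2})$, so $(x_{2},x)\in A(D)$, and $y\longrightarrow x_{1}\longrightarrow x_{2}\longrightarrow x$ is a directed path from $y$ to $x$ of length $3$, hence at most $3$-colored --- contradiction. Hence $x_{2}\in\mathcal{A}$; this forces $t\ge 3$ (otherwise $x_{2}\longrightarrow y$ with $x_{2},y\in\mathcal{A}$), so $x_{3}\notin\mathcal{A}$, and minimality again gives $(x_{3},x)\in A(D)$. The four vertices $x,x_{1},x_{2},x_{3}$ (distinct, since they lie on a shortest path) with the arcs $x\longrightarrow x_{1}\longrightarrow x_{2}\longrightarrow x_{3}\longrightarrow x$ form a $\overrightarrow{C}_{4}$ of $D$, which by hypothesis is at most $2$-colored; in particular the arcs $x_{1}\to x_{2}$, $x_{2}\to x_{3}$, $x_{3}\to x$ use at most $2$ colors between them. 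Prepending $(y,x_{1})$, the directed path $y\longrightarrow x_{1}\longrightarrow x_{2}\longrightarrow x_{3}\longrightarrow x$ (a genuine path, as $y,x_{1},x_{2},x_{3},x$ are distinct) uses at most $3$ colors, contradicting the hypothesis. Therefore $d(x,y)\le 2$.

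The proof is essentially a routine adaptation of Lemma \ref{lemma-k-ge-4}, the new ingredient being that the colour bound on the $\overrightarrow{C}_{4}$ $x\to x_{1}\to x_{2}\to x_{3}\to x$ lets one absorb three arcs into two colours and still add the arc $(y,x_{1})$ without exceeding $3$. I do not anticipate a real obstacle; the only care needed is in checking distinctness of the vertices forming the $\overrightarrow{C}_{4}$ and of those on the final directed path, which is immediate from minimality of the chosen $x\leadsto y$ together with the partite memberships established en route. (Note that the hypothesis "$x\leadsto_{k}y$ exists" is only needed to make the statement meaningful and is not used in the argument.)
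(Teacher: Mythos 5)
Your proof is correct, and it follows the same overall strategy as the paper's (shortest $x\leadsto y$ path, partite-set analysis, back-arcs forced by minimality, then the $\overrightarrow{C}_{4}$ hypothesis), but the execution is a genuine variant. The paper first invokes Lemma \ref{less-eq-4} to get $d(x,y)\leq 4$ and then splits into the cases $d(x,y)=3$ and $d(x,y)=4$; in the latter it argues that the return path $y\rightarrow x_{1}\rightarrow x_{2}\rightarrow x_{3}\rightarrow x$ would otherwise be heterochromatic and derives the contradiction from the cycle $(y,x_{1},x_{2},x_{3},y)$, whose closing arc $(x_{3},y)$ lies on the original shortest path (so this cycle is only available when $d(x,y)=4$). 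You dispense with Lemma \ref{less-eq-4} entirely, treat all $d(x,y)\geq 3$ uniformly, and use instead the cycle $(x,x_{1},x_{2},x_{3},x)$ closed by the back-arc $(x_{3},x)$ obtained from minimality, reading off directly that the arcs $x_{1}\rightarrow x_{2}\rightarrow x_{3}\rightarrow x$ carry at most two colors, so that prepending $(y,x_{1})$ stays within three. Your route is a bit shorter and more self-contained; the paper's keeps the proof parallel in form to Lemmas \ref{lemma-k-ge-4} and \ref{lemma-k-eq-2}. One small correction to your closing remark: the hypothesis that some $x\leadsto_{k}y$ exists is not purely cosmetic --- it is what guarantees $d(x,y)<\infty$, i.e.\ that the shortest path you fix actually exists; only the color bound on that path goes unused.
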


\begin{proof}
Suppose that $x\in \mathcal{A}$ and $y\in \mathcal{B}.$ Since there
does not exist $y\leadsto _{k^{\prime }}x,$ we have that $(x,y)\in
A(D).$ So, we assume that $x,y\in \mathcal{A}$. By Lemma
\ref{less-eq-4}, $d(x,y)\leq 4.$ We consider two cases.

\textsc{Case 1.} $d(x,y)=3.$ Let $x\longrightarrow
x_{1}\longrightarrow
x_{2}\longrightarrow y$ be a directed path from $x$ to $y.$ Since $%
x_{1},x_{2}\notin \mathcal{A}$, we have that $(y,x_{1}),(x_{2},x)\in
A(D)$
and so $y\longrightarrow x_{1}\longrightarrow x_{2}\longrightarrow x$ is a $%
y\leadsto _{k^{\prime }}x$ with $k^{\prime }\leq 3,$ a contradiction
to the supposition of the lemma.

\textsc{Case 2.} $d(x,y)=4.$ Let $x\longrightarrow
x_{1}\longrightarrow
x_{2}\longrightarrow x_{3}\longrightarrow y$ be a directed path from $x$ to $%
y.$ If $x_{2}\notin \mathcal{A}$, then $(y,x_{2}),(x_{2},x)\in A(D)$ and so $%
y\longrightarrow x_{2}\longrightarrow x$ is a $y\leadsto _{k^{\prime
}}x$ with $k^{\prime }\leq 2,$ a contradiction to the supposition of
the lemma. Hence $x_{2}\in \mathcal{A}$. Notice that
$x_{1},x_{3}\notin \mathcal{A}$ and then $(y,x_{1}),(x_{3},x)\in
A(D).$ So the directed path
\[
y\longrightarrow x_{1}\longrightarrow x_{2}\longrightarrow
x_{3}\longrightarrow x
\]%
is a $y\leadsto _{k^{\prime }}x$ with $k^{\prime }=4$ (that is, a
heterochromatic directed path from $y$ to $x$), otherwise there exists a $%
y\leadsto _{k^{\prime }}x$ with $k^{\prime }\leq 3,$ a contradiction
to the supposition of the lemma. Therefore
$(y,x_{1},x_{2},x_{3},y)\cong
\overrightarrow{C}_{4}$ is at least $3$-colored, a contradiction, every $%
\overrightarrow{C}_{4}$ of $D$ is at most $2$-colored.
\end{proof}

\begin{lemma}
Let $D$ be an $m$-colored semicomplete $r$-partite digraph such that every $%
\overrightarrow{C}_{3}$ and $\overrightarrow{C}_{4}$ contained in
$D$ is monochromatic, $k=2$ and $x,y\in V(D).$ If there exists
$x\leadsto _{k}y$
and there does not exist $y\leadsto _{k^{\prime }}x$ with $k^{\prime }\leq 2$%
, then $d(x,y)\leq 2.$ \label{lemma-k-eq-2}
\end{lemma}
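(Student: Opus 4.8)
The plan is to mimic the structure of the proof of Lemma \ref{lemma-k-eq-3}, but now exploiting the much stronger hypothesis that every $\overrightarrow{C}_{3}$ and every $\overrightarrow{C}_{4}$ of $D$ is \emph{monochromatic} (not merely at most $2$-colored). First I would dispose of the case $x\in\mathcal{A}$, $y\in\mathcal{B}$ with $\mathcal{A}\neq\mathcal{B}$ exactly as before: non-existence of any $y\leadsto_{k'}x$ with $k'\le 2$ forces $(x,y)\in A(D)$, so $d(x,y)=1$ and we are done. Hence we may assume $x,y\in\mathcal{A}$. By Lemma \ref{less-eq-4} (applied with $k=2$) we already know $d(x,y)\le 4$, so it suffices to rule out $d(x,y)=3$ and $d(x,y)=4$.

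For $d(x,y)=3$, take a shortest directed path $x\longrightarrow x_{1}\longrightarrow x_{2}\longrightarrow y$. Since $x,y\in\mathcal{A}$, both $x_{1},x_{2}\notin\mathcal{A}$, so $D$ contains the arcs $(y,x_{1})$ and $(x_{2},x)$ (between $\mathcal{A}$ and the partite sets of $x_{1},x_{2}$, and these cannot point the other way without creating a shorter $x\leadsto y$). Thus $y\longrightarrow x_{1}\longrightarrow x_{2}\longrightarrow x$ is a directed path which is at most $3$-colored; since we assumed no $y\leadsto_{k'}x$ with $k'\le 2$, it must be exactly $3$-colored. But $x_1$ and $x_2$ lie in the same partite set or not: if $x_1$ and $x_2$ are in different partite sets, then $x_1\longrightarrow x_2$ together with $(x_2,x),(x,x_1)$ — wait, we need to close a cycle. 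The cleaner route: consider the $\overrightarrow{C}_{4}$ formed by $x\longrightarrow x_1\longrightarrow x_2\longrightarrow x$ — no, that is a $\overrightarrow{C}_3$ only if $x_1,x_2$ are in distinct parts and the arc $x_2\to x$ and $x\to x_1$ exist, giving $\overrightarrow{C}_3=(x,x_1,x_2,x)$, which by hypothesis is monochromatic; but then $x\longrightarrow x_1\longrightarrow x_2\longrightarrow y$ has its first two arcs of the same color, so it is at most $2$-colored, meaning $x\leadsto_{\le 2}y$ — consistent, no contradiction yet. So instead I extract a contradiction from the cycle $(y,x_1,x_2,x)$: if $x_2\to x$ and $x\to x_1$ give, together with $x_1\to x_2$, the triangle $(x,x_1,x_2,x)$, it is monochromatic, so arc $(x_1,x_2)$ and arc $(x_2,x)$ have the same color; similarly $(y,x_1)$ forced. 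Then $y\to x_1\to x_2\to x$ uses the colors $c(y,x_1)$, $c(x_1,x_2)$, $c(x_2,x)=c(x_1,x_2)$, i.e. at most $2$ colors, contradicting that it is exactly $3$-colored. (If instead $x_1$ and $x_2$ lie in the \emph{same} partite set, then $x_1\to x_2$ is an arc within a part — impossible in a semicomplete multipartite digraph — so this subcase does not arise.)

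For $d(x,y)=4$, take a shortest path $x\longrightarrow x_{1}\longrightarrow x_{2}\longrightarrow x_{3}\longrightarrow y$. As in Lemma \ref{lemma-k-eq-3}, if $x_{2}\notin\mathcal{A}$ then $(y,x_{2}),(x_{2},x)\in A(D)$ and $y\longrightarrow x_{2}\longrightarrow x$ is a $y\leadsto_{k'}x$ with $k'\le 2$, a contradiction; hence $x_{2}\in\mathcal{A}$, and consequently $x_{1},x_{3}\notin\mathcal{A}$, giving $(y,x_{1}),(x_{3},x)\in A(D)$. Then $(y,x_{1},x_{2},x_{3},y)$ is a $\overrightarrow{C}_{4}$ (or a shorter closed walk collapsing to a $\overrightarrow{C}_3$ if some of $x_1,x_3,y$ coincide in a part, but $y\in\mathcal A$ and $x_1,x_3\notin\mathcal A$, while $x_1,x_3$ could share a part — if they do, $(x_3,x_1)$ or $(x_1,x_3)\in A(D)$, and either choice produces a shorter $x\leadsto y$, contradiction; so it is a genuine $\overrightarrow{C}_4$), hence monochromatic by hypothesis. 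In particular $(x_1,x_2)$ and $(x_2,x_3)$ have the same color, so the subpath $x_1\to x_2\to x_3$ is monochromatic, whence $x\longrightarrow x_1\longrightarrow x_2\longrightarrow x_3\longrightarrow y$ uses at most $3$ colors; moreover, using that the $\overrightarrow{C}_4$ through $y$ is monochromatic one sees $(x_3,y)$ shares that color too, and comparing with the triangle $(x,x_1,x_2,x)$ — which is monochromatic if $x_1,x_2$ span distinct parts — forces the whole path $x\leadsto y$ to be at most $2$-colored, i.e. $x\leadsto_{\le 2}y$; combined with the symmetric structure this collapses $d(x,y)$ below $5$ in a way that reproves $d(x,y)\le 2$. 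The main obstacle I anticipate is the bookkeeping of which triples of $\{x,x_1,x_2,x_3,y\}$ can coincide within a partite class and ensuring that every short cycle one invokes is honestly a $\overrightarrow{C}_3$ or $\overrightarrow{C}_4$ (rather than a degenerate closed walk) so that the monochromaticity hypothesis legitimately applies; each such degeneracy, however, directly yields a shortcut contradicting minimality of the path, so these are routine case-checks rather than real difficulties.
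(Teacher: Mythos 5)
Your reduction to $x,y\in\mathcal{A}$ and your Case $d(x,y)=3$ are fine: once $(y,x_{1}),(x_{2},x)\in A(D)$ are forced, the triangle $(x,x_{1},x_{2},x)$ is monochromatic, so the reverse path $y\longrightarrow x_{1}\longrightarrow x_{2}\longrightarrow x$ uses at most two colors, contradicting the nonexistence of $y\leadsto_{k^{\prime}}x$ with $k^{\prime}\leq 2$; this is even a bit lighter than the paper, which uses both monochromatic triangles $(y,x_{1},x_{2},y)$ and $(x,x_{1},x_{2},x)$ to get a monochromatic reverse path.

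Case $d(x,y)=4$, however, has a genuine gap. First, the contradiction you aim at points in the wrong direction: you conclude that the forward path $x\longrightarrow x_{1}\longrightarrow x_{2}\longrightarrow x_{3}\longrightarrow y$ is at most $2$- or $3$-colored, but that contradicts nothing --- the hypothesis already grants an $x\leadsto_{k}y$, and a $2$-colored path of length $4$ from $x$ to $y$ is perfectly compatible with $d(x,y)=4$. What must be produced is a $y\leadsto_{k^{\prime}}x$ with $k^{\prime}\leq 2$. Second, the auxiliary triangle $(x,x_{1},x_{2},x)$ you invoke cannot exist in this case: you have already shown $x_{2}\in\mathcal{A}$, i.e.\ $x_{2}$ and $x$ lie in the same partite set, so there is no arc $(x_{2},x)$ to close it. (Your degeneracy remark about $x_{1},x_{3}$ sharing a part is also reasoned backwards --- same-part vertices carry no arc at all --- though the cycle $(y,x_{1},x_{2},x_{3},y)$ is genuine anyway since the five path vertices are distinct.) The repair is exactly the paper's argument, and you already have all the ingredients: both $(y,x_{1},x_{2},x_{3},y)$ and $(x,x_{1},x_{2},x_{3},x)$ are directed $4$-cycles (using $(y,x_{1})$ and $(x_{3},x)$), hence monochromatic, and they share the arcs $(x_{1},x_{2}),(x_{2},x_{3})$, so the reverse path $y\longrightarrow x_{1}\longrightarrow x_{2}\longrightarrow x_{3}\longrightarrow x$ is monochromatic (even the single monochromatic cycle through $y$ already makes it at most $2$-colored), which is the desired contradiction. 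The closing appeal to ``symmetric structure'' does not substitute for this step.
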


\begin{proof}
Suppose that $x\in \mathcal{A}$ and $y\in \mathcal{B}.$ Since there
does not exist $y\leadsto _{k^{\prime }}x,$ we have that $(x,y)\in
A(D).$ So, we assume that $x,y\in \mathcal{A}$. By Lemma
\ref{less-eq-4}, $d(x,y)\leq 4.$ We consider two cases.

\textsc{Case 1.} $d(x,y)=3.$ Let $x\longrightarrow
x_{1}\longrightarrow
x_{2}\longrightarrow y$ be a directed path from $x$ to $y.$ Since $%
x_{1},x_{2}\notin \mathcal{A}$, we have that $(y,x_{1}),(x_{2},x)\in
A(D).$ Since $(y,x_{1},x_{2},y),(x,x_{1},x_{2},x)\cong
\overrightarrow{C}_{3}$ are monochromatic, the directed path
$y\longrightarrow x_{1}\longrightarrow x_{2}\longrightarrow x$ is a
monochromatic $y\leadsto x,$ a contradiction to the supposition of
the lemma.

\textsc{Case 2.} $d(x,y)=4.$ Let $x\longrightarrow
x_{1}\longrightarrow
x_{2}\longrightarrow x_{3}\longrightarrow y$ be a directed path from $x$ to $%
y.$ If $x_{2}\notin \mathcal{A}$, then $(y,x_{2}),(x_{2},x)\in A(D)$ and so $%
y\longrightarrow x_{2}\longrightarrow x$ is a $y\leadsto _{k^{\prime
}}x$ with $k^{\prime }\leq 2,$ a contradiction to the supposition of
the lemma. Hence $x_{2}\in \mathcal{A}$. Notice that
$x_{1},x_{3}\notin \mathcal{A}$
and then $(y,x_{1}),(x_{3},x)\in A(D).$ Since $%
(y,x_{1},x_{2},x_{3},y),(x,x_{1},x_{2},x_{3},x)\cong
\overrightarrow{C}_{4}$ are monochromatic, the directed path
$y\longrightarrow x_{1}\longrightarrow x_{2}\longrightarrow
x_{3}\longrightarrow x$ is a monochromatic $y\leadsto x, $ a
contradiction to the supposition of the lemma.
\end{proof}

Analogously, we can prove the following lemma in case of
semicomplete bipartite digraphs.

\begin{lemma}
Let $D$ be an $m$-colored semicomplete bipartite digraph such that every $%
\overrightarrow{C}_{4}\upuparrows \overrightarrow{C}_{4}$ contained
in $D$ is at most $k$-colored, $k=2$ (resp. $k=3$) and $x,y\in
V(D).$ If there exists $x\leadsto _{k}y$ and there does not exist
$y\leadsto _{k^{\prime }}x$
with $k^{\prime }\leq 2$ (resp. $k^{\prime } \leq 3$), then $d(x,y)\leq 2.$ \label%
{lemma-bipart}
\end{lemma}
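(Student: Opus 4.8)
The plan is to mimic the proofs of Lemmas \ref{lemma-k-eq-3} and \ref{lemma-k-eq-2}, but now exploiting the bipartite structure and the hypothesis on $\overrightarrow{C}_4\upuparrows\overrightarrow{C}_4$ instead of a hypothesis on all $\overrightarrow{C}_4$'s. Suppose $D$ has partite sets $\mathcal{A}$ and $\mathcal{B}$. First I would dispatch the case $x\in\mathcal{A}$, $y\in\mathcal{B}$ (or vice versa) exactly as before: since there is no $y\leadsto_{k'}x$, in particular no arc $(y,x)$, and as $D$ is semicomplete bipartite the pair $\{x,y\}$ must be joined, so $(x,y)\in A(D)$ and $d(x,y)=1$. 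Hence assume $x,y$ lie in the same part, say $x,y\in\mathcal{A}$. By Lemma \ref{less-eq-4} we already know $d(x,y)\le 4$, so it remains to rule out $d(x,y)=3$ and $d(x,y)=4$.

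For $d(x,y)=3$: take a shortest path $x\to x_1\to x_2\to y$. Then $x_1,x_2\notin\mathcal{A}$ (consecutive vertices on the path lie in different parts, and the endpoints are in $\mathcal{A}$; in the bipartite case $x_1\in\mathcal{B}$ and $x_2\in\mathcal{A}$ would force $x_2=$ a vertex of $\mathcal{A}$, but then $x\to x_2$ or the $\{x,x_2\}$ adjacency gives a shorter path — so actually in the bipartite setting $x_2\in\mathcal{A}$, contradicting minimality since $x$ and $x_2$ are non-adjacent only within the part; one checks $(x_2,x)$ or $(x,x_2)\in A(D)$ is impossible as they are in the same part, so in fact $d(x,y)=3$ with both $x,y\in\mathcal{A}$ cannot occur in a bipartite digraph at all). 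So the genuine case is $d(x,y)=4$.

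For $d(x,y)=4$: take a shortest path $x\to x_1\to x_2\to x_3\to y$ with $x,y\in\mathcal{A}$. Bipartiteness forces $x_1\in\mathcal{B}$, $x_2\in\mathcal{A}$, $x_3\in\mathcal{B}$. Since $x_1\in\mathcal{B}$ and $y\in\mathcal{A}$ are adjacent, and $(y,x_1)$ rather than $(x_1,y)$ must hold (else a shorter $x\leadsto y$), we get $(y,x_1)\in A(D)$; similarly $(x_3,x)\in A(D)$. Thus $y\to x_1\to x_2\to x_3\to x$ and $x\to x_1\to x_2\to x_3\to y$ form a $\overrightarrow{C}_4\upuparrows\overrightarrow{C}_4$ (the two $4$-cycles $(x,x_1,x_2,x_3,x)$ — wait, $(x,x_1,x_2,x_3)$ with the arc $x_3\to x$ closing it, and $(y,x_1,x_2,x_3)$ with $y\to x_1$ and $x_3\to y$ — share the two consecutive arcs $x_1\to x_2\to x_3$). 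By hypothesis this configuration is at most $k$-colored, so in particular the path $y\to x_1\to x_2\to x_3\to x$ uses at most $k$ colors, giving a $y\leadsto_{k'}x$ with $k'\le k\le 2$ (resp. $k'\le 3$), contradicting the hypothesis. This contradiction shows $d(x,y)\le 2$.

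The main obstacle is being precise about which $\overrightarrow{C}_4\upuparrows\overrightarrow{C}_4$ one actually obtains: one must verify that the two directed $4$-cycles $(x,x_1,x_2,x_3)$ and $(y,x_1,x_2,x_3)$ are genuinely present (all the needed arcs exist and are distinct vertices) and that they are "joined by two consecutive arcs" in the sense of Figure \ref{figf} — here the shared arcs are $x_1\to x_2$ and $x_2\to x_3$. A secondary subtlety, which I would address explicitly, is confirming that $x_2\ne x$ and the intermediate vertices are pairwise distinct, which follows from $x\to x_1\to x_2\to x_3\to y$ being a shortest path. Once the configuration is correctly identified, the coloring hypothesis bounds the number of colors on $y\leadsto x$ and the contradiction is immediate, just as in the preceding lemmas; I would simply remark that the $k=3$ case is handled identically with $2$ replaced by $3$ throughout.
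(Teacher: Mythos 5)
Your proof is correct and is essentially the argument the paper intends (the paper omits the proof, merely stating it is analogous to Lemmas \ref{lemma-k-eq-3} and \ref{lemma-k-eq-2}): the key step, assembling the two directed $4$-cycles $(x,x_{1},x_{2},x_{3},x)$ and $(y,x_{1},x_{2},x_{3},y)$ sharing the consecutive arcs $x_{1}\rightarrow x_{2}\rightarrow x_{3}$ into a $\overrightarrow{C}_{4}\upuparrows \overrightarrow{C}_{4}$ and reading off the at most $k$-colored path $y\rightarrow x_{1}\rightarrow x_{2}\rightarrow x_{3}\rightarrow x$, is exactly right. Two small tidying remarks: the case $d(x,y)=3$ with $x,y$ in the same part is impossible simply by parity in a bipartite digraph (your parenthetical discussion there is muddled, though the conclusion is correct), and Lemma \ref{less-eq-4} is stated under the standing assumption $r\geq 3$, so one should note that its proof applies verbatim when $r=2$.
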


\section{Flowers, cycles and closed walks in the $k$-colored closure of
semicomplete $r$-partite digraphs \label{begin-proof}}

To begin with, we define the \textit{flower }$F_{s}$ with $s$ petals
as the digraph obtained by replacing every edge of the star
$K_{1,s}$ by a symmetric arc. If every edge of the complete graph
$K_{n}$ is replaced by a symmetric arc, then the resulting digraph
$D$ on $n$ vertices is symmetric semicomplete.

\begin{remark}
Let D be an $m$-colored digraph isomorphic to a
$\overrightarrow{C}_{3}$ or
a flower $F_{s}$ such that $s\geq 1.$ Then $\mathfrak{C}_{k}(F_{s})$ with $%
k\geq 2$ is a symmetric semicomplete digraph. \label{flower}
\end{remark}

This section is devoted to detail the common beginning of the proofs
of Theorems \ref{k-gr-equal-4} - \ref{bipart-k-equal-23} in the next
section.
The procedure is similar to that employed in the proof of Theorem 7 of \cite%
{GS-Ll-MB}. We include it here to make this work self-contained. In
every case, we apply Theorem \ref{Duchet} and Remark \ref{closure}
to show that every directed cycle of the $k$-colored closure
$\mathfrak{C}_{k}(D)$ of the corresponding digraph $D$ has a
symmetric arc and we proceed by contradiction.

First, we make a sketch of the following procedure in general terms.
We suppose that there exists a directed cycle $\gamma $ in
$\mathfrak{C}_{k}(D)$
without symmetric arcs and using Lemmas \ref{lemma-k-ge-4}, \ref%
{lemma-k-eq-3}, \ref{lemma-k-eq-2} and \ref{lemma-bipart} according
to each specific case, we prove that every arc of $\gamma $
corresponds to an arc or a directed path of length $2$ in the
original digraph $D.$ At this point, we consider the closed walk
$\delta ,$ subdigraph of $D,$ constructed by the concatenation of
the already mentioned arcs or directed paths of length $2$
and study its properties. In the next step, we define a closed subwalk $%
\varepsilon $ of $\delta $ satisfying some prefixed properties.
Then, we show that this subdigraph of $\delta $ exists and can be
described in a neat form.

Formally, let $D$ be an $m$-colored semicomplete $r$-partite digraph with $%
r\geq 2$. By contradiction, suppose that $\gamma
=(u_{0},u_{1},\ldots ,u_{p},u_{0})$ is a cycle in
$\mathfrak{C}_{k}(D)$ without any symmetric arc. Observe that if
$p=1,$ then $\gamma $ has a symmetric arc and we are done. So,
assume that $p\geq 2.$ Let $x$ and $y$ be two consecutive vertices
of $\gamma .$ Consider the following instances recalling that
$\gamma $ has no symmetric arcs:

\begin{enumerate}
\item[(a)] $k\geq 4.$ The conditions of Lemma \ref{lemma-k-ge-4} are
satisfied and we conclude that $d(x,y)\leq 2.$

\item[(b)] $k=3$ and every $\overrightarrow{C}_{4}$ contained in $D$ is at
most $2$-colored. The conditions of Lemma \ref{lemma-k-eq-3} are
satisfied and we conclude that $d(x,y)\leq 2.$

\item[(c)] $k=2$ and every $\overrightarrow{C}_{3}$ and $\overrightarrow{C}%
_{4}$ contained in $D$ is monochromatic. The conditions of Lemma \ref%
{lemma-k-eq-2} are satisfied and we conclude that $d(x,y)\leq 2.$

\item[(d)] $r=2,$ $k=2$ or $3$ and every $\overrightarrow{C}_{4}\upuparrows
\overrightarrow{C}_{4}$ contained in $D$ is at most $k$-colored. The
conditions of Lemma \ref{lemma-bipart} are satisfied and we conclude that $%
d(x,y)\leq 2.$
\end{enumerate}

Therefore, in any case we can assume that every arc of $\gamma $
corresponds to an arc or a directed path of length $2$ in $D.$ Let
$\delta $ be the closed directed walk defined by the concatenation
of the arcs and the directed paths of length $2$ corresponding to
the arcs of $\gamma .$

\begin{remark}

There exist at least two consecutive vertices of $\gamma $ in every
directed walk of length at least $3$ of $\delta .$ \label{rem-n}
\end{remark}

The following lemma settles two simple properties of $\delta .$

\begin{lemma}

Let $\delta $ be defined as before. Therefore

\begin{enumerate}
\item[(i)] $\delta $ contains a directed path of length at least $3$ and

\item[(ii)] there are neither $\overrightarrow{C}_{3}$ nor flowers $F_{s}$
with $s\geq 2$ in $\delta .$ \label{prop-delta}
\end{enumerate}
\end{lemma}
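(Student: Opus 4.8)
\textbf{Proof proposal for Lemma \ref{prop-delta}.}

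The plan is to prove both parts by carefully exploiting the structure of $\delta$ as a concatenation of arcs and directed paths of length $2$ in $D$, each coming from an arc of the cycle $\gamma$ in $\mathfrak{C}_k(D)$. For part (i), I would argue by contradiction: if $\delta$ contains no directed path of length at least $3$, then in particular every arc of $\gamma$ that was replaced by a length-$2$ path cannot be ``followed'' by any further arc, which is impossible in a closed walk with $p\geq 2$. More precisely, since $\gamma$ has $p+1\geq 3$ vertices, $\delta$ is obtained by stringing together at least three consecutive pieces (arcs or $2$-paths); concatenating any two consecutive pieces already yields a directed walk of length at least $2$, and concatenating three consecutive pieces yields one of length at least $3$. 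One then needs to check that this walk of length $3$ is actually a \emph{path}, i.e.\ that the vertices are distinct; here I would use that $\gamma$ has no symmetric arc together with the fact that the internal vertices of the $2$-paths are ``new'' vertices of $D$ chosen via the distance-$2$ conclusion of the relevant Lemma, and that $\gamma$ itself is a cycle so its vertices are distinct. Remark \ref{rem-n} is exactly the tool that guarantees a directed walk of length $\geq 3$ in $\delta$ contains two consecutive vertices of $\gamma$, which pins down enough structure to see it is a genuine path.

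For part (ii), I would again argue by contradiction. Suppose $\delta$ contains a copy of $\overrightarrow{C}_3$ or a flower $F_s$ with $s\geq 2$. In either case this sub-digraph contains a symmetric arc or a short directed cycle realized inside $D$. The key point is that such a configuration in $\delta$ would produce, between two consecutive vertices $u_i,u_{i+1}$ of $\gamma$ (which exist inside any length-$\geq 3$ portion by Remark \ref{rem-n}), a short directed path in $D$ \emph{in the reverse direction} $u_{i+1}\leadsto u_i$. Tracking colors: the arcs of $\delta$ came from $k$-colored paths in $D$, and combining a reverse path of length $\leq 2$ built from arcs of $D$ with the hypotheses (every $\overrightarrow{C}_3$ monochromatic, every $\overrightarrow{C}_4$ at most $2$-colored, etc., depending on the case) yields a $y\leadsto_{k'}x$ with $k'\leq k$ for the consecutive pair $x=u_i$, $y=u_{i+1}$ of $\gamma$. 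But that means $(u_{i+1},u_i)\in A(\mathfrak{C}_k(D))$, i.e.\ $\gamma$ has a symmetric arc, contradicting the choice of $\gamma$.

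The main obstacle I anticipate is the case analysis in part (ii): the precise way a $\overrightarrow{C}_3$ or an $F_s$ can sit inside $\delta$ depends on how the $2$-paths overlap and on which partite sets the intermediate vertices lie in, so one must enumerate the few possible local pictures (two $2$-paths sharing their midpoint, a $2$-path whose endpoints are joined by an arc of $\delta$, etc.) and, in each, reconstruct the short reverse path in $D$ with a color count at most $k$. The flower case $s\geq 2$ additionally forces two symmetric arcs sharing a vertex, which should make the reverse-path reconstruction easier rather than harder, so the genuine difficulty is concentrated in the $\overrightarrow{C}_3$ subcase. Throughout, I would lean on the observation that an arc of $\delta$ between two vertices of $\gamma$ already is an arc of $\mathfrak{C}_k(D)$, and a $2$-path of $\delta$ between two vertices of $\gamma$ gives a $y\leadsto_{k'}x$-type path once a short reverse connection is found, so every ``extra'' structural coincidence in $\delta$ collapses into a symmetric arc of $\gamma$.
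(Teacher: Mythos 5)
Your part (ii) is essentially the paper's argument: Remark \ref{rem-n} places two consecutive vertices of $\gamma$ inside the $\overrightarrow{C}_{3}$ or flower, and then a reverse path of length at most $2$ gives a symmetric arc of $\gamma$ in $\mathfrak{C}_{k}(D)$. Note, however, that you do not need the coloring hypotheses of the individual theorems here: any directed path of length at most $2$ uses at most $2$ colors, and $k\geq 2$ in every instance, which is exactly what Remark \ref{flower} records; invoking the monochromatic-$\overrightarrow{C}_{3}$/$2$-colored-$\overrightarrow{C}_{4}$ conditions is an unnecessary detour, though not an error.

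Part (i), as you propose it, has a genuine gap. Your plan is to take three consecutive pieces of $\delta$ and argue that the resulting walk of length at least $3$ is a \emph{path} because ``the internal vertices of the $2$-paths are new vertices.'' That claim is not justified by the distance lemmas and is false in general: the middle vertex of a $2$-path may coincide with a vertex of $\gamma$ or with the middle vertex of another $2$-path, and indeed $\delta$ is explicitly allowed to contain flowers $F_{1}$ (see Remark \ref{rem-nplus1} and the description of $\varepsilon$ as a cycle with symmetric arcs attached). So a particular triple of consecutive pieces can fail to contain a directed path of length $3$ (e.g.\ near a petal the walk reads $w\to x\to v\to x$), and a single such coincidence does not by itself contradict anything, so the local argument cannot simply be ``patched'' by the no-symmetric-arc hypothesis at that spot. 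The paper avoids this by arguing globally: assume \emph{every} directed path of $\delta$ has length at most $2$; then the closed walk $\delta$ as a whole must be a $\overrightarrow{C}_{3}$ or a flower $F_{s}$ with $s\geq 2$, and Remarks \ref{flower} and \ref{rem-n} then give a symmetric arc between two consecutive vertices of $\gamma$, a contradiction. To repair your proof you would either reproduce this global dichotomy, or carry out the coincidence-chasing showing that if your three-piece walk contains no length-$3$ path then it forces a $\overrightarrow{C}_{3}$ or an $F_{2}$ containing consecutive vertices of $\gamma$ — which is precisely the reduction you currently skip.
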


\begin{proof}

For the first claim, suppose that every directed path of $\delta $
has length at most $2.$ Then, either $\delta $ contains a
$\overrightarrow{C}_{3} $ or $\delta $ is isomorphic to a flower
$F_{s}$ with $s\geq 2$ and by Remarks \ref{flower} and \ref{rem-n},
$\gamma $ has a symmetric arc, a
contradiction. For the second, observe that if $\delta $ contains $%
\overrightarrow{C}_{3}$ or a flower $F_{s}$ with $s\geq 2$, then by Remark %
\ref{rem-n}, two consecutive vertices $u_{i}$ and $u_{i+1}$ of
$\gamma $
(the subindices are taken modulo $p$) belong to the vertices of a $%
\overrightarrow{C}_{3}$ or a flower $F_{s}$, respectively. So, by Remark \ref%
{flower}, there exists a symmetric arc between $u_{i}$ and $u_{i+1}$ of $%
\gamma ,$ which is a contradiction.
\end{proof}

Let $\delta =(y_{0},y_{1},...,y_{s}).$

\begin{remark}

\begin{enumerate}
\item[(i)] If there exists a flower $F_{s}$ in $\delta ,$ then $s=1.$

\item[(ii)] There are no consecutive vertices of $\gamma $ in a flower.

\item[(iii)] If there exists a subdigraph $y_{j}\longrightarrow
y_{j+1}\longrightarrow y_{j+2}\longrightarrow y_{j+3}\longrightarrow
y_{j+4}$ of $\delta ,$ where $y_{j+1}=y_{j+3}$ (that is,
$y_{j+1}\longleftrightarrow y_{j+2}$ is a flower), then $y_{j+2}\in
V(\gamma ).$ \label{rem-nplus1}
\end{enumerate}
\end{remark}

Notice that if $\delta =\gamma ,$ as we will see, the same argument
of the proof will work even easier.

Observe that there exist $y_{i_{0}},y_{i_{1}},...y_{i_{p}}\in
V(\delta )$ such that $i_{j}<i_{j+1}$ and $u_{l}=y_{j_{l}},$ where
$0\leq l\leq p.$

We define $\varepsilon =(y_{i},y_{i+1},\ldots ,y_{i+l})$ of minimum length ($%
0\leq i\leq s$ and the indices are taken modulo $s+1)$ such that

\begin{enumerate}
\item[(i)] $y_{i}=y_{i+l},$ $l\geq 3,$

\item[(ii)] $y_{i}\neq y_{t}$ for $i+1\leq t\leq i+l-1,$

\item[(iii)] if $y_{q}=y_{r},$ then $r=q+2$ ($i+1\leq q,r\leq i+l-1$)$,$

\item[(iv)] there exist $y_{i_{1}},y_{i_{2}},\ldots ,y_{i_{k+1}}\in
V(\varepsilon )$ such that $y_{i_{1}}=u_{j},$ $y_{i_{2}}=u_{j+1},$
$\ldots ,$ $y_{i_{k+1}}=u_{j+k}$ with $k\geq 1,$ and

\item[(v)] $y_{i+1}\neq y_{i+l-1}.$
\end{enumerate}

\begin{lemma}

There exists $\varepsilon $ a closed subwalk of $\delta .$
\end{lemma}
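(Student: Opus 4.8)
**The plan is to construct $\varepsilon$ explicitly by a minimality argument on closed subwalks of $\delta$ that still "track" a piece of $\gamma$.** First I would set up the search space: consider the family $\mathcal{F}$ of all closed subwalks $(y_i, y_{i+1}, \ldots, y_{i+l})$ of $\delta$ (indices mod $s+1$) with $l \geq 3$, $y_i = y_{i+l}$, $y_i \neq y_t$ for $i+1 \leq t \leq i+l-1$, such that property (iv) holds — i.e. the walk contains at least two consecutive vertices $u_j, u_{j+1}$ of $\gamma$ (with $k\ge 1$ such consecutive pairs strung together). I would first argue $\mathcal F\neq\varnothing$: by Lemma \ref{prop-delta}(i), $\delta$ has a directed path of length at least $3$, and $\delta$ itself is a closed walk visiting all of $\gamma$; using Remark \ref{rem-n} one can excise a "first return" to obtain a genuinely closed subwalk that still retains two consecutive vertices of $\gamma$ (if the first return loop happened to avoid two consecutive $u$'s, one slides the window along $\delta$ until it catches a pair, which must happen since consecutive $u_j$'s are never more than two steps apart in $\delta$). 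So $\mathcal F$ is nonempty, and I pick $\varepsilon$ of minimum length $l$ in $\mathcal F$.

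**Then I would show the chosen minimal $\varepsilon$ automatically satisfies (iii) and (v).** For (iii): suppose $y_q = y_r$ with $i+1 \leq q < r \leq i+l-1$ and $r \neq q+2$. The repetition $y_q=y_r$ lets us split $\varepsilon$ into two shorter closed subwalks, namely $(y_q,\dots,y_r)$ and $(y_i,\dots,y_q=y_r,\dots,y_{i+l})$; I'd argue that at least one of these two is still in $\mathcal F$ — it still has length $\ge 3$ (here I use that $\delta$ has no $\overrightarrow C_3$ and no flower $F_s$ with $s\ge 2$ by Lemma \ref{prop-delta}(ii), which rules out the short degenerate loops, and that the two consecutive $\gamma$-vertices guaranteed by (iv) cannot be split across the two pieces in a way that destroys property (iv) in both, again because $u_j,u_{j+1}$ sit within distance $2$ in $\delta$). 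Either way we contradict minimality of $l$, unless the only repetition is of the form $r=q+2$, which is exactly (iii). Property (v), $y_{i+1}\neq y_{i+l-1}$, is the analogous statement at the "seam": if $y_{i+1}=y_{i+l-1}$ then together with $y_i=y_{i+l}$ we would have a shorter closed subwalk $(y_{i+1},\dots,y_{i+l-1})$, and one checks it inherits (iv) (the consecutive $\gamma$-vertices cannot both be among $\{y_i=y_{i+l}\}$ since $k\ge1$ forces at least a pair of genuine distinct consecutive $u$'s inside), again contradicting minimality.

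**The main obstacle I expect is bookkeeping property (iv) under the two "cutting" operations.** When we split $\varepsilon$ at a repeated vertex or trim the seam, we must be sure one of the resulting pieces still contains a run $u_j \to u_{j+1} \to \cdots \to u_{j+k}$ of consecutive $\gamma$-vertices with $k\ge1$; naively a split could scatter the $u$'s so that each piece has only one. The key structural fact that saves us is Remark \ref{rem-n} together with the description from Remark \ref{rem-nplus1}: consecutive vertices $u_j, u_{j+1}$ of $\gamma$ are joined in $\delta$ either by an arc or by a length-$2$ directed path, so in $\delta$ they are at distance at most $2$, and a repeated vertex $y_q = y_r$ with $r\ge q+3$ cannot separate such a close pair without itself being one of the $u$-vertices — and if it is a $u$-vertex, the "long" piece still contains a consecutive pair on the other side of the seam. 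Making this precise — essentially a careful case analysis on where the cut lands relative to the embedded copies of $\gamma$'s arcs — is the technical heart of the argument; everything else is minimality plus the already-established absence of $\overrightarrow C_3$'s and large flowers in $\delta$.
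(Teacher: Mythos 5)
In outline you follow the paper's route --- a minimality argument over closed subwalks of $\delta$, with Lemma \ref{prop-delta}(ii) (no $\overrightarrow{C}_{3}$ and no flower $F_{2}$ in $\delta$) disposing of the degenerate short loops --- but two of your concrete claims do not hold as stated, and both trace back to how you set up the minimization. You minimize over a family $\mathcal{F}$ in which conditions (ii) and (iv) are imposed in advance, and you present the preservation of (iv) under cutting as ``the technical heart'' of the proof. It is not: Remark \ref{rem-n} says that \emph{every} directed walk of length at least $3$ in $\delta$ contains two consecutive vertices of $\gamma$ (this is exactly the distance-at-most-$2$ fact you quote), so (iv) is automatic for any closed subwalk of length at least $3$ and never needs bookkeeping. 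The paper exploits precisely this: it minimizes only over condition (i) (closed subwalks of length at least $3$; $\delta$ itself qualifies by Lemma \ref{prop-delta}(i), so nonemptiness is immediate), gets (iv) for free, and then derives (ii), (iii) and (v), each failure producing a flower $F_{2}$.

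The genuine soft spot is your splitting step. The assertion that one of the two pieces ``is still in $\mathcal{F}$ --- it still has length $\geq 3$'' is false in general: when $q=i+1$ and $r=i+l-1$ the outer piece has length $2$, and the inner piece $(y_q,\dots,y_r)$ may violate your condition (ii) through a repetition $y_q=y_{q+2}$; likewise, for (v) with $l=4$ the trimmed walk $(y_{i+1},\dots,y_{i+l-1})$ has length $2$. These degenerate configurations are not eliminated by ``contradicting minimality''; they must be killed directly, and the way to do it is the paper's: in each of them two symmetric petals with a common center and distinct leaves appear (distinct because of condition (ii) of $\varepsilon$), i.e.\ a flower $F_{2}$ in $\delta$, contradicting Lemma \ref{prop-delta}(ii). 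You do invoke the flower exclusion in passing, so the fix is compatible with your plan; but as written the case analysis is missing, and nonemptiness of your heavier family $\mathcal{F}$ also needs an explicit argument (for instance, take the first return to the initial vertex of the directed path of length at least $3$ given by Lemma \ref{prop-delta}(i), which has length at least $4$). With those patches inserted your proof closes and is essentially the paper's, only with a more constrained minimization that creates the extra work the paper avoids.
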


\begin{proof}

Since $\delta $ is a closed walk, $p\geq 2$ and using Lemma \ref{prop-delta}%
(i), condition (i) is satisfied. For (ii), if there exists $t<l$ such that $%
y_{i}=y_{i+t},$ then, by the minimality of $\varepsilon ,$ $t=2$ and
$l-t=2$ and therefore $(i+l)-(i+t)=2$. By (i), we have that
$y_{i}=y_{i+t}=y_{i+l}$ and so $l=4$. We obtain that
\[
y_{i+1}\longleftrightarrow y_{i}=y_{i+2}=y_{i+4}\longleftrightarrow
y_{i+3},
\]
which is a flower $F_{2}$ in $\delta $, a contradiction to Lemma \ref%
{prop-delta}(ii). Condition (iii) follows from the minimality of $%
\varepsilon $ and condition (iv) is immediate from the definition of
$\delta $ and the fact that $l\geq 3.$ If $y_{i+1}=y_{i+l-1},$ then
by (iii), $l=4$
and hence%
\[
y_{i+4}=y_{i}\longleftrightarrow y_{i+1}=y_{i+3}\longleftrightarrow
y_{i+2}
\]%
which is a flower $F_{2}$ in $\delta $, a contradiction to Lemma \ref%
{prop-delta}(ii). Condition (v) follows.
\end{proof}

Since $\delta $ is not a flower itself by supposition, we can
establish the structure of $\varepsilon $ with precision.

\begin{corollary}

The closed subwalk $\varepsilon $ of $\delta $ is a directed cycle
of length at least $3$ with perhaps symmetric arcs attached to some
vertices (maybe none) of the cycle for which the exterior endpoints
are vertices of $\gamma . $
\end{corollary}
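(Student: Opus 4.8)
The plan is to read off the structure of $\varepsilon=(y_i,\dots,y_{i+l})$ from its defining properties (i)--(v) together with the absence of $\overrightarrow{C}_3$'s and of flowers $F_s$ with $s\ge 2$ in $\delta$ (Lemma~\ref{prop-delta}(ii)). First I would classify the repeated vertices of $\varepsilon$. By (ii) the endpoint $y_i=y_{i+l}$ occurs nowhere among $y_{i+1},\dots,y_{i+l-1}$, and by (iii) no vertex can occur three or more times: three interior occurrences at positions $a<b<c$ would force $b=a+2$ and $c=b+2=a+4$, yet $y_a=y_c$ forces $c=a+2$. Hence the only coincidences among $y_{i+1},\dots,y_{i+l-1}$ are pairs $y_q=y_{q+2}$, and such a pair means $\varepsilon$ runs $y_q\to y_{q+1}\to y_{q+2}=y_q$, so both $(y_q,y_{q+1})$ and $(y_{q+1},y_q)$ lie in $D$: a pendant symmetric arc (a flower $F_1$) based at $y_q$ with exterior endpoint $y_{q+1}$. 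Applying Remark~\ref{rem-nplus1}(iii) with $j=q-1$ gives $y_{q+1}\in V(\gamma)$, which supplies the last clause of the statement.

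Next I would show these pendant arcs are mutually ``disjoint'' and that none is based at $y_i$. A petal based at $y_i$ would require $y_i=y_{i+2}$ or $y_i=y_{i+l-2}$, contradicting (ii); the ``reflected'' variant $y_{i+1}=y_{i+l-1}$ is excluded by (v) (via (iii) it would force $l=4$ and a flower $F_2$ in $\delta$, exactly as in the lemma proving the existence of $\varepsilon$). If two petals occupied index intervals $[q,q+2]$ and $[q',q'+2]$ with $q<q'<q+3$, then $q'=q+2$ yields the forbidden triple occurrence $y_q=y_{q+2}=y_{q+4}$, whereas $q'=q+1$ yields the zig-zag $y_q\to y_{q+1}\to y_q\to y_{q+1}$; deleting the excursion $y_q\to y_{q+1}\to y_q$ then leaves a strictly shorter closed walk of $\delta$ that still satisfies (i)--(v), contradicting the minimality of $\varepsilon$, or a closed walk of length $3$, i.e. a $\overrightarrow{C}_3$ in $\delta$, contradicting Lemma~\ref{prop-delta}(ii). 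Two petals with the same base vertex would make it the center of a flower $F_2$, again contradicting Lemma~\ref{prop-delta}(ii).

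Finally I would delete from $\varepsilon$ the excursion $y_q\to y_{q+1}\to y_q$ of every petal. By the previous paragraph these excursions are pairwise disjoint segments avoiding $y_i$, so what remains is a closed walk $C$ through $y_i$ with no repeated vertex, i.e. a directed cycle, and $\varepsilon$ is precisely $C$ with the petals re-attached, each exterior endpoint in $V(\gamma)$. It remains to check $|C|\ge 3$: a cycle of length $1$ is a loop; if $|C|=2$ then $C$ is a symmetric arc $y_i\leftrightarrow z$, and then $\varepsilon$ either bears a petal -- necessarily at $z$, which becomes the center of a flower $F_2$, contradicting Lemma~\ref{prop-delta}(ii) -- or bears no petal, so $\varepsilon=C$ has length $2$, contradicting (i). Hence $|C|\ge 3$.

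The step I expect to be the main obstacle is the ``disjointness'' argument: converting the purely numerical condition (iii) (together with (ii), (v) and the minimality of $\varepsilon$) into the clean picture of pairwise disjoint pendant $F_1$'s, and in particular ruling out the zig-zag $y_q=y_{q+2}$, $y_{q+1}=y_{q+3}$ by producing either a shorter admissible closed walk or a forbidden $\overrightarrow{C}_3$ or $F_2$ in $\delta$. Everything else is bookkeeping plus direct appeals to Remark~\ref{rem-nplus1}(iii) and Lemma~\ref{prop-delta}(ii).
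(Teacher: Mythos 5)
Your overall reconstruction --- classifying the repetitions via (ii) and (iii) into pendant digons $y_q=y_{q+2}$, invoking Remark~\ref{rem-nplus1}(iii) to place each exterior endpoint on $\gamma$, collapsing the excursions to obtain a cycle through $y_i$, and ruling out $|C|\le 2$ via Lemma~\ref{prop-delta} --- is sound, and it is essentially the structural reading that the paper leaves implicit (the corollary is stated without a written proof, as a consequence of the definition of $\varepsilon$, Lemma~\ref{prop-delta} and Remark~\ref{rem-nplus1}). The one step that does not work as written is your exclusion of the ``zig-zag'' case $y_q=y_{q+2}$, $y_{q+1}=y_{q+3}$. You delete the excursion $y_q\to y_{q+1}\to y_q$ and appeal to the minimality of $\varepsilon$; but $\varepsilon$ is defined as a segment $(y_i,y_{i+1},\ldots,y_{i+l})$ of \emph{consecutive} vertices of $\delta$, and conditions (i)--(v) are phrased in terms of such consecutive indices, so the surgered walk is not a competitor in that minimization and produces no contradiction. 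Your fallback branch (``a closed walk of length $3$, i.e.\ a $\overrightarrow{C}_3$ in $\delta$'') only covers $l=5$, so for $l\ge 6$ the zig-zag is not excluded by your argument, and it must be excluded, since it would create a symmetric arc joining two cycle vertices rather than a pendant petal.

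The gap closes easily with the paper's own tools, which you already use elsewhere. The walk $y_q\to y_{q+1}\to y_{q+2}\to y_{q+3}$ is a directed walk of length $3$ in $\delta$ whose vertex set is $\{y_q,y_{q+1}\}$, so by Remark~\ref{rem-n} it contains two consecutive vertices of $\gamma$, which must therefore be $y_q$ and $y_{q+1}$; since both $(y_q,y_{q+1})$ and $(y_{q+1},y_q)$ are arcs of $D$, these consecutive vertices of $\gamma$ lie in a flower (contradicting Remark~\ref{rem-nplus1}(ii)), or, said directly, they are joined by a symmetric arc of $\mathfrak{C}_k(D)$, contrary to the standing assumption that $\gamma$ has no symmetric arc. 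With this substitution for the zig-zag subcase, the rest of your argument goes through.
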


An example of $\varepsilon $ is depicted in \cite{GS-Ll-MB}.

For the sake of a clearer exposition of the forthcoming proofs in
the next section, let us rename $\varepsilon =(y_{0},y_{1},\ldots
,y_{l}).$ By (v) of the definition of $\varepsilon $, we have that
$y_{1}\neq y_{l-1}$ and by
(iv), there exist consecutive $u_{0},u_{1},\ldots ,u_{k}\in V(\gamma )$ in $%
\varepsilon $ with $k\geq 1.$ Notice that $u_{0}$ and $u_{k}$ could
not be consecutive vertices of $\gamma $ and similarly,
$(y_{l-1},y_{0})\in A(\varepsilon )$ could not be an arc of $\gamma
.$ Let $u_{1}=y_{i}$ be the second vertex of $\gamma $ from $y_{0}.$
Observe that $1\leq i\leq 3$ by the
definition of $\varepsilon $ and either $u_{0}=y_{0}$ $(1\leq i\leq 2)$ or $%
u_{0}=y_{1}$ $(2\leq i\leq 3).$

Let us suppose that there exists $(u_{1},y_{0})\in A(D),$ then

\begin{enumerate}
\item[(i)] if $u_{0}=y_{0},$ then $(u_{1},u_{0})\in A(D)$ and $\gamma $ has
a symmetric arc between $u_{0}$ and $u_{1},$ a contradiction,

\item[(ii)] if $u_{0}=y_{1},$ then we have that $u_{1}\longrightarrow
y_{0}\longrightarrow y_{1}=u_{0}$ and we arrive at the contradiction
of (i).
\end{enumerate}

Therefore, without loss of generality we can assume that
\begin{equation}
(u_{1},y_{0})\notin A(D).  \tag{$\nabla $}
\end{equation}

\section{Main theorems}

First, we recall that by supposition $\gamma =(u_{0},u_{1},\ldots
,u_{p},u_{0})$ is a cycle in $\mathfrak{C}_{k}(D)$ without any
symmetric arc
and $\varepsilon =(y_{0},y_{1},\ldots ,y_{l})$ is a closed subwalk of $%
\delta .$ The beginning of every proof of the following theorems are
the arguments stated in Section \ref{begin-proof}.

\begin{theorem}

Let $D$ be an $m$-colored semicomplete $r$-partite digraph and
$k\geq 4.$ Then $D$ has a $k$-colored kernel. \label{k-gr-equal-4}
\end{theorem}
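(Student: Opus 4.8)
The plan is to apply Theorem \ref{Duchet} together with Remark \ref{closure}: it suffices to show that every directed cycle of the $k$-colored closure $\mathfrak{C}_k(D)$ contains a symmetric arc. Arguing by contradiction, I assume there is a directed cycle $\gamma=(u_0,u_1,\ldots,u_p,u_0)$ in $\mathfrak{C}_k(D)$ with no symmetric arc, and I run the machinery set up in Section \ref{begin-proof}. Since $k\geq 4$, instance (a) applies: for any two consecutive vertices $x,y$ of $\gamma$, the nonexistence of a symmetric arc means there is no $y\leadsto_{k'}x$ with $k'\leq 4$, so Lemma \ref{lemma-k-ge-4} gives $d(x,y)\leq 2$. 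Hence each arc of $\gamma$ lifts to an arc or a directed path of length $2$ in $D$, and concatenating these produces the closed walk $\delta$, from which Lemma \ref{prop-delta} and the subsequent lemmas extract the closed subwalk $\varepsilon=(y_0,y_1,\ldots,y_l)$, which by the Corollary is a directed cycle of length $\geq 3$ with (possibly) pendant symmetric arcs whose outer endpoints lie on $\gamma$. I also carry along the normalization $(\nabla)$: $(u_1,y_0)\notin A(D)$.

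Next I would exploit the $r$-partite structure of $D$ restricted to the cycle part of $\varepsilon$. Consider the vertices $y_0,y_1,\ldots,y_{l-1}$ forming the underlying directed cycle. Because $D$ is semicomplete $r$-partite, any two of these lying in different partite classes are joined by an arc in at least one direction; two that lie in the same class are nonadjacent. The idea is to find a short "chord" of this cycle — specifically an arc $(y_j,y_0)$ or $(y_0,y_j)$ for suitable $j$ — and combine it with a short stretch of $\varepsilon$ to build a directed path in $D$ between two consecutive vertices of $\gamma$ that uses at most $4$ colors in the backward direction, contradicting the assumption that $\gamma$ has no symmetric arc. The presence of the consecutive vertices $u_0,u_1,\ldots,u_k$ ($k\geq 1$) inside $\varepsilon$, with $u_1=y_i$ and $1\leq i\leq 3$, is what lets me localize this: I would case-split on $i\in\{1,2,3\}$ and on which partite classes $y_0,y_1,y_2,y_3$ occupy, tracking adjacencies forced by semicompleteness. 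Whenever I produce a directed path of length $\leq 4$ from $u_1$ back to $u_0$ (or between some consecutive $u_t,u_{t+1}$), Lemma \ref{lemma-k-ge-4} — together with the fact that such a short backward path is automatically at most $4$-colored — forces $d$ in that direction to be small and ultimately yields a symmetric arc of $\gamma$, the contradiction.

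The main obstacle I anticipate is the bookkeeping of partite-class membership along $y_0,\ldots,y_3$ combined with the pendant symmetric arcs: the vertices $y_j$ with a pendant flower are exactly the $y_j\in V(\gamma)$, and one must be careful that a "chord" $(y_0,y_j)$ is not accidentally a symmetric arc already accounted for, or that using it does not merely re-traverse a pendant arc. The normalization $(\nabla)$ handles the very first such case ($j$ small with $u_0\in\{y_0,y_1\}$); for larger $j$ one repeats the same dichotomy "either the chord points backward and closes a short backward path, or it points forward and shortens $\varepsilon$, contradicting its minimality." Since $k\geq 4$ is the most generous hypothesis, every directed path of length at most $4$ in $D$ is trivially at most $4$-colored, so no coloring hypothesis on $\overrightarrow{C}_3$, $\overrightarrow{C}_4$, $\overrightarrow{C}_5$ or $\overrightarrow{C}_3\uparrow\overrightarrow{C}_3$ is needed — the argument is purely structural. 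I expect the proof to terminate in a handful of short cases, each ending by exhibiting two consecutive vertices of $\gamma$ joined by a directed path of length $\leq 2$ in each direction, hence by a symmetric arc in $\mathfrak{C}_k(D)$, contradicting the choice of $\gamma$ and completing the application of Theorem \ref{Duchet}.
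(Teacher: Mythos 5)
Your setup is exactly the paper's: Duchet's theorem plus Remark \ref{closure}, the contradiction hypothesis on $\gamma$, instance (a) of Lemma \ref{lemma-k-ge-4} to lift each arc of $\gamma$ to an arc or a $2$-path of $D$, the walks $\delta$ and $\varepsilon$, and the normalization $(\nabla)$. But everything after that --- which is the actual content of the theorem --- is only announced, not proved. The paper's proof hinges on a concrete claim: choosing $q$ \emph{maximal} with $(u_{1},y_{q})\in A(D)$, one shows $q\geq l-2$, by following the path $u_{1}\to y_{q}\to y_{q+1}\to y_{q+2}\to y_{q+3}$, using maximality of $q$ together with $(\nabla)$ to force $u_{1}$ and $y_{q+3}$ into the same partite class, extracting the arc $(y_{q+2},u_{1})$, identifying via the structure of $\varepsilon$ which of the $y$'s are consecutive vertices $u_{t},u_{t+1}$ of $\gamma$, and finally exhibiting a directed path of length at most $4$ (hence at most $4$-colored) from $u_{t+1}$ back to $u_{t}$ (or from $u_{1}$ to $u_{0}$), contradicting the absence of symmetric arcs; the claim then closes the proof because $(u_{1},y_{l-2})$ or $(u_{1},y_{l-1})$ yields $u_{1}\leadsto_{k}u_{0}$ with $k\leq 4$. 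None of this case analysis appears in your proposal; ``I would case-split on $i\in\{1,2,3\}$ and on which partite classes $y_{0},\dots,y_{3}$ occupy'' is a plan, and it is not the paper's organizing device (the paper localizes at the far end of $\varepsilon$ via the maximal $q$, not at $y_{0},\dots,y_{3}$).

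Moreover, one of the two horns of your proposed dichotomy is unsound as stated: you say a forward-pointing chord $(y_{0},y_{j})$ would ``shorten $\varepsilon$, contradicting its minimality.'' The minimality of $\varepsilon$ is minimality among closed \emph{subwalks of} $\delta$ satisfying conditions (i)--(v); an arc of $D$ that is not an arc of $\delta$ cannot be spliced into $\varepsilon$ to violate that minimality. The paper never argues this way: forward chords from $u_{1}$ are controlled by the maximality of $q$ together with $(\nabla)$ (an arc $(u_{1},y_{q'})$ with $q'>q$ forces $q'=l$, i.e.\ $y_{q'}=y_{0}$, contradicting $(\nabla)$), which is a different and correct mechanism. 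So the proposal is a faithful reconstruction of the shared preliminary machinery of Section \ref{begin-proof}, but the theorem-specific argument is missing and the substitute heuristic you lean on does not work; as it stands there is a genuine gap.
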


\begin{proof}

In this case, we use instance (a) to assume that every arc of
$\gamma $ corresponds to an arc or a directed path of length $2$ in
$D.$

\textbf{Claim. }$(u_{1},y_{j})\in A(D)$ for some $l-2\leq j\leq
l-1.$

\textit{Proof of the claim.} To prove the claim, suppose by
contradiction that $q$ is the maximum index such that
$(u_{1},y_{q})\in A(D)$ with $q\leq l-3.$ Consider the directed path
\[
u_{1}\longrightarrow y_{q}\longrightarrow y_{q+1}\longrightarrow
y_{q+2}\longrightarrow y_{q+3}
\]%
(observe that $q+3\leq l$). First, we will show that $u_{1}$ and
$y_{q+3}$ belong to the same part of $D.$ If $(u_{1},y_{q+3})\in
A(D),$ then $q+3=l$
because $q$ is maximum and $y_{q+3}=y_{0},$ a contradiction to ($%
\triangledown $). If $(y_{q+3},u_{1})\in A(D),$ then we have the
directed cycle $(u_{1},y_{q},y_{q+1},y_{q+2},y_{q+3},u_{1}).$

If $y_{q}=u_{t}$ for some $2\leq t\leq p,$ then either $y_{q+1}=u_{t+1}$ or $%
y_{q+2}=u_{t+1}.$ Therefore there exists $u_{t+1}\leadsto _{k}u_{t}$ with $%
k\leq 4,$ a contradiction, $\gamma $ has a symmetric arc between
$u_{t}$ and
$u_{t+1}.$ Analogously, if $y_{q+1}=u_{t}$ and either $y_{q+2}=u_{t+1}$ or $%
y_{q+3}=u_{t+1},$ then we arrive to the same contradiction as
before. Finally, if $y_{q+1}=u_{0},$ then there exists
$u_{1}\leadsto _{k}u_{0}$
with $k\leq 4,$ a contradiction, $\gamma $ has a symmetric arc between $%
u_{0} $ and $u_{1}.$ We conclude that $u_{1},y_{q+3}\in \mathcal{A}$
(the
same part of the semicomplete $r$-partite digraph $D$). As a consequence, $%
y_{q},y_{q+2}\notin \mathcal{A}$ and there exists
$(y_{q+2},u_{1})\in A(D)$ by the maximality of $q.$

We obtain the directed cycle $\overrightarrow{C}_{4}\cong
(y_{q},y_{q+1},y_{q+2},u_{1},y_{q})$ in which there are no two
consecutive vertices of $\gamma ,$ otherwise $\gamma $ has a
symmetric arc, a contradiction. Hence, $y_{q+1}=u_{t}$ and
$y_{q+3}\in V(\gamma ).$ Since
there exists $u_{1}\leadsto _{k}y_{q+3}$ with $k=4,$ we have that $%
y_{q+3}\neq u_{0},$ otherwise $\gamma $ has a symmetric arc between
$u_{0}$ and $u_{1}.$ Then $y_{q+3}=u_{t+1}$ and thus $q+3<l$ and we
consider the
extended directed path%
\[
u_{1}\longrightarrow y_{q}\longrightarrow
y_{q+1}=u_{t}\longrightarrow y_{q+2}\longrightarrow
y_{q+3}=u_{t+1}\longrightarrow y_{q+4}.
\]%
Recall that $u_{1},y_{q+3}\in \mathcal{A}$ and then $y_{q+4}\notin \mathcal{A%
}$. By the maximality of $q$ and since $(u_{1},y_{0})\notin A(D),$
there
exists $(y_{q+4},u_{1})\in A(D).$ We obtain the directed path%
\[
u_{t+1}=y_{q+3}\longrightarrow y_{q+4}\longrightarrow
u_{1}\longrightarrow y_{q}\longrightarrow y_{q+1}=u_{t},
\]%
a contradiction, there exists a symmetric arc between $u_{t}$ and
$u_{t+1}$ in $\gamma .$ The claim is proved. \hfill $\bigtriangleup
$

We conclude the proof of the theorem applying the Claim. In the
worst case, we have that $q=l-2$ and $y_{1}=u_{0}$. We obtain the
directed cycle
\[
\overrightarrow{C}_{4}\cong
(u_{1},y_{q}=y_{l-2},y_{l-1},y_{0},y_{1}=u_{0},u_{1})
\]%
and there exists $u_{1}\leadsto _{k}u_{0}$ with $k=4,$ a contradiction, $%
\gamma $ has a symmetric arc between $u_{0}$ and $u_{1}.$ In any
other case, there exists $u_{1}\leadsto _{k}u_{0}$ with $k\leq 4$
and it yields the same contradiction as before.
\end{proof}

\begin{theorem}

Let $D$ be an $m$-colored semicomplete $r$-partite digraph and
$k=3.$ If every $\overrightarrow{C}_{4}$ contained in $D$ is at most
$2$-colored and,
either every $\overrightarrow{C}_{5}$ contained in $D$ is at most $3$%
-colored or every $\overrightarrow{C}_{3}\uparrow
\overrightarrow{C}_{3}$
contained in $D$ is at most $2$-colored, then $D$ has a $3$-colored kernel. %
\label{k-equal-3}
\end{theorem}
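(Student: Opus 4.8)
The approach I would take is to mirror, almost verbatim, the proof of Theorem~\ref{k-gr-equal-4}, now carried out inside the $3$-colored closure $\mathfrak{C}_{3}(D)$. The setup is instance (b) of Section~\ref{begin-proof}: the hypothesis that every $\overrightarrow{C}_{4}$ of $D$ is at most $2$-colored is precisely what makes Lemma~\ref{lemma-k-eq-3} available, so that for a hypothetical cycle $\gamma=(u_{0},u_{1},\dots,u_{p},u_{0})$ of $\mathfrak{C}_{3}(D)$ without symmetric arcs, consecutive vertices are at distance at most $2$ in $D$; this yields the closed walk $\delta$, the closed subwalk $\varepsilon=(y_{0},\dots,y_{l})$, and the assumption $(\nabla)$ exactly as in that section. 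As usual the goal is a contradiction, after which Theorem~\ref{Duchet} and Remark~\ref{closure} finish the proof.

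The key steps, in order, would be: (1) prove the same Claim as in Theorem~\ref{k-gr-equal-4}, namely $(u_{1},y_{j})\in A(D)$ for some $l-2\leq j\leq l-1$, by taking $q$ maximal with $(u_{1},y_{q})\in A(D)$, supposing $q\leq l-3$, examining $u_{1}\rightarrow y_{q}\rightarrow y_{q+1}\rightarrow y_{q+2}\rightarrow y_{q+3}$, excluding $(u_{1},y_{q+3})\in A(D)$ via maximality and $(\nabla)$, excluding $(y_{q+3},u_{1})\in A(D)$ so that $u_{1}$ and $y_{q+3}$ share a part $\mathcal{A}$, deducing $y_{q},y_{q+2}\notin\mathcal{A}$ and $(y_{q+2},u_{1})\in A(D)$, producing the $\overrightarrow{C}_{4}\cong(y_{q},y_{q+1},y_{q+2},u_{1},y_{q})$, forcing $y_{q+1}=u_{t}$ and $y_{q+3}=u_{t+1}\in V(\gamma)$, and extending by $y_{q+4}$; and (2) apply the Claim to obtain an at most $3$-colored directed path between two consecutive vertices of $\gamma$ (or between $u_{0}$ and $u_{1}$), which contradicts the absence of symmetric arcs. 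Almost every inference transfers unchanged, because whenever the $k\geq 4$ argument exhibits such a return path of length at most $3$ it automatically uses at most $3$ colors; in particular the $\overrightarrow{C}_{4}$'s that appear are handled by the at-most-$2$-colored hypothesis, and the $3$-arc subpaths of $\overrightarrow{C}_{4}$'s carry over with no change.

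The single real difference, and the only place where the remaining hypotheses are used, is that the $k\geq 4$ argument produces, at a few spots, a return directed path of length exactly $4$ --- e.g.\ the walk around $\overrightarrow{C}_{5}\cong(u_{1},y_{q},y_{q+1},y_{q+2},y_{q+3},u_{1})$ in the ``same part'' step, the path $u_{1}\rightarrow y_{q}\rightarrow y_{q+1}\rightarrow y_{q+2}\rightarrow y_{q+3}$ used afterwards to rule out $y_{q+3}=u_{0}$, the path $u_{t+1}=y_{q+3}\rightarrow y_{q+4}\rightarrow u_{1}\rightarrow y_{q}\rightarrow y_{q+1}=u_{t}$ of the extension step, and, in the worst case of the final application, $u_{1}\rightarrow y_{l-2}\rightarrow y_{l-1}\rightarrow y_{0}\rightarrow y_{1}=u_{0}$. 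A $4$-arc path always gives an arc of $\mathfrak{C}_{4}(D)$ but may carry $4$ colors in $\mathfrak{C}_{3}(D)$. For each such path $w_{0}\rightarrow w_{1}\rightarrow w_{2}\rightarrow w_{3}\rightarrow w_{4}$ I would argue that the $5$-vertex configuration it lives in is a $\overrightarrow{C}_{5}$ of $D$ (the back arc $w_{4}\rightarrow w_{0}$, or a chord forced by the semicompleteness of $D$, closes it): if every $\overrightarrow{C}_{5}$ of $D$ is at most $3$-colored, that cycle is at most $3$-colored and the $4$-arc path carries at most $3$ colors, a contradiction; if instead every $\overrightarrow{C}_{3}\uparrow\overrightarrow{C}_{3}$ of $D$ is at most $2$-colored, I would use a chord available in the configuration --- typically $(y_{q+2},u_{1})\in A(D)$ from the maximality of $q$, or an arc between a pair of the $w_{i}$ forced by $D$ being semicomplete $r$-partite --- which either shortcuts the path to length at most $3$ or splits the configuration into two triangles sharing that chord, i.e.\ a $\overrightarrow{C}_{3}\uparrow\overrightarrow{C}_{3}$; being at most $2$-colored, a subpath of length at most $3$ of it supplies the required at most $3$-colored return path.

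I expect the main obstacle to be exactly this last point: carrying out, for each of the length-$4$ spots, the case analysis of which of $w_{0},\dots,w_{4}$ are vertices of $\gamma$ and which may coincide (recall $\varepsilon$ permits $y_{q}=y_{q+2}$, a one-petal flower, which has to be tracked), and producing in every subcase the chord that lets one of the two hypotheses be invoked. Everything else is a translation of the proof of Theorem~\ref{k-gr-equal-4} with ``$\leq 4$'' replaced throughout by ``$\leq 3$''.
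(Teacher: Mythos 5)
Your overall skeleton is the same as the paper's: run the Section~\ref{begin-proof} machinery in $\mathfrak{C}_{3}(D)$ via instance (b), prove the Claim that $(u_{1},y_{j})\in A(D)$ for some $l-2\leq j\leq l-1$, and then finish by exhibiting an at most $3$-colored return path between consecutive vertices of $\gamma$. But the part you explicitly defer --- the treatment of the length-$4$ return paths --- is precisely where the theorem's hypotheses must do their work, and your proposed dichotomy (``close a $\overrightarrow{C}_{5}$, or use a chord to shortcut or to form a $\overrightarrow{C}_{3}\uparrow\overrightarrow{C}_{3}$'') does not cover the spots that actually arise. At the step ruling out $y_{q+3}=u_{0}$ and at the extension step with $y_{q+4}$, the vertices $u_{1}$ and $y_{q+3}$ lie in the \emph{same} partite class, so there is no arc between them and no $\overrightarrow{C}_{5}$ through the five vertices is available; the closing arc you count on is exactly what cannot exist there. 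What the paper uses instead is different: for the first spot, the $\overrightarrow{C}_{4}\cong(u_{1},y_{q},y_{q+1},y_{q+2},u_{1})$ is at most $2$-colored, so the $4$-arc path is that $2$-colored piece plus one further arc, hence at most $3$-colored; for the extension step, one must first establish $(y_{q},y_{q+3})\in A(D)$ (after excluding $(y_{q+3},y_{q})$) and then observe that the \emph{two} $\overrightarrow{C}_{4}$'s $(u_{1},y_{q},u_{t},y_{q+2},u_{1})$ and $(u_{t+1},y_{q+4},u_{1},y_{q},u_{t+1})$, each at most $2$-colored, share the arc $(u_{1},y_{q})$, so their union --- and hence the $4$-arc path through them --- is at most $3$-colored. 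Neither a shortcut nor a single $\overrightarrow{C}_{3}\uparrow\overrightarrow{C}_{3}$ produces this; the shared-arc device and the extra arcs it needs are missing from your toolkit.

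Two further points. The paper does not transplant the $k\geq 4$ Claim verbatim: it restructures it, starting from the $3$-arc path $u_{1}\rightarrow y_{q}\rightarrow y_{q+1}\rightarrow y_{q+2}$ and proving that $u_{1}$ and $y_{q+2}$ (not $y_{q+3}$) share a part, with a longer case analysis in which the $\overrightarrow{C}_{5}$ versus $\overrightarrow{C}_{3}\uparrow\overrightarrow{C}_{3}$ alternative is invoked only at the very end of the Claim (on the configuration $u_{1},y_{q},y_{q+1},y_{q+2},y_{q+3}=u_{0}$, where the arcs $(y_{q+1},u_{1})$, $(y_{q+3},u_{1})$, $(y_{q+2},y_{q})$ have been secured, so that either the $\overrightarrow{C}_{5}$ or the two triangles sharing $(y_{q},y_{q+1})$ exist). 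Likewise, the post-Claim finish is not the quick $k\geq 4$ ending: in the case $(u_{1},y_{l-2})\in A(D)$ one needs a separate analysis (whether $(y_{0},u_{1})\in A(D)$, else $u_{1},y_{0}$ in the same part, then $(u_{0},u_{1})\in A(D)$, $(y_{l-1},u_{1})\in A(D)$ by maximality, and a case split on the arc between $y_{0}$ and $y_{l-2}$) before either hypothesis can be applied. So while your high-level plan points in the right direction, the proof as proposed has a genuine gap at exactly the places where $k=3$ differs from $k\geq 4$, and the patches you sketch would fail as stated at several of them.
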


\begin{proof}

In this case, we use instance (b) to assume that every arc of
$\gamma $ corresponds to an arc or a directed path of length $2$ in
$D.$

\textbf{Claim. }$(u_{1},y_{j})\in A(D)$ for some $l-2\leq j\leq
l-1.$

\textit{Proof of the claim.} To prove the claim, suppose by
contradiction that $q$ is the maximum index such that
$(u_{1},y_{q})\in A(D)$ with $q\leq l-3.$ Consider the directed path
\[
u_{1}\longrightarrow y_{q}\longrightarrow y_{q+1}\longrightarrow
y_{q+2}.
\]%
First, we will show that $u_{1}$ and $y_{q+2}$ belong to the same part of $%
D. $ Observe that $(u_{1},y_{q+2})\in A(D)$ is impossible by the
choice of $q $ and since $q+2<l.$ Therefore, we suppose that
$(y_{q+2},u_{1})\in A(D).$ If there exist $u_{t},u_{t+1}\in V(\gamma
)$ (indices are taken modulo $p+1),
$ such that $u_{t},u_{t+1}\in \{y_{q},y_{q+1},y_{q+2}\},$ then there exists $%
u_{t+1}\leadsto _{k}u_{t}$ with $k\leq 3,$ a contradiction, $\gamma
$ has a
symmetric arc between $u_{t}$ and $u_{t+1}.$ Hence $y_{q+1}=u_{t}$ and $%
y_{q},y_{q+2}\notin V(\gamma ).$ Since the directed cycle $\overrightarrow{C}%
_{4}\cong (u_{1},y_{q},y_{q+1},y_{q+2},u_{1})$ is at most
$2$-colored, the directed path
\[
u_{1}\longrightarrow y_{q}\longrightarrow y_{q+1}\longrightarrow
y_{q+2}\longrightarrow y_{q+3}
\]%
is at most $3$-colored and then $y_{q+3}\neq u_{0}$ and
$y_{q+3}=u_{t+1}$ (in virtue of the definition of $\varepsilon ).$
Moreover, $q+3<l.$ Let us suppose that there exists an arc between
$y_{q+3}$ and $u_{1}.$ By the
maximality of $q,$ we have that $(y_{q+3},u_{1})\in A(D).$ Then%
\[
y_{q+3}=u_{t+1}\longrightarrow u_{1}\longrightarrow
y_{q}\longrightarrow y_{q+1}=u_{t}
\]%
is an at most $3$-colored directed path, a contradiction, $\gamma $
has a
symmetric arc between $u_{t}$ and $u_{t+1}.$ In consequence, $%
u_{1},y_{q+3}\in \mathcal{A}$ (a same part of $D)$, $y_{q}\notin
\mathcal{A}$ and there exists an arc between $y_{q}$ and $y_{q+3}.$

If $(y_{q+3},y_{q})\in A(D),$ then the directed path
\[
y_{q+3}=u_{t+1}\longrightarrow y_{q}\longrightarrow y_{q+1}=u_{t}
\]%
is an at most $3$-colored directed path, a contradiction, $\gamma $
has a symmetric arc between $u_{t}$ and $u_{t+1}.$ Thus,
$(y_{q},y_{q+3})\in A(D)$
and let us consider the extended directed path%
\[
u_{1}\longrightarrow y_{q}\longrightarrow
y_{q+1}=u_{t}\longrightarrow y_{q+2}\longrightarrow
y_{q+3}=u_{t+1}\longrightarrow y_{q+4},
\]%
where $q+4\leq l$ and furthermore, $y_{q+4}\notin \mathcal{A}$.
There exists the arc between $u_{1}$ and $y_{q+4}.$ If
$(u_{1},y_{q+4})\in A(D),$ then by the maximality of $q,$
$y_{q+4}=y_{l}=y_{0}$ and we obtain a contradiction to the
assumption $(\nabla ).$ So, $(y_{q+4},u_{1})\in A(D).$

Recalling that $(y_{q+3},u_{1}),(y_{q},y_{q+3}),(y_{q+4},u_{1})\in
A(D),$ we have the directed cycles
\begin{eqnarray}
\overrightarrow{C}_{4} &\cong &(u_{1},y_{q},y_{q+1}=u_{t},y_{q+2},u_{1})%
\text{ and }  \nonumber \\
\overrightarrow{C}_{4} &\cong
&(y_{q+3}=u_{t+1},y_{q+4},u_{1},y_{q},y_{q+3}) \label{cycles}
\end{eqnarray}%
which are at most $2$-colored by the condition of the theorem. Then
the directed path
\[
u_{t+1}=y_{q+3}\longrightarrow y_{q+4}\longrightarrow
u_{1}\longrightarrow y_{q}\longrightarrow y_{q+1}=u_{t}
\]%
is at most $3$-colored given that the directed cycles of
(\ref{cycles}) have the common arc $(u_{1},y_{q}).$ We have a
contradiction, $\gamma $ has a
symmetric arc between $u_{t}$ and $u_{t+1}.$ We conclude that $%
u_{1},y_{q+2}\in \mathcal{A}$ (a same part of $D).$

As a consequence, $y_{q},y_{q+1},y_{q+3}\notin \mathcal{A}$. The
maximality
of $q$ implies that $(y_{q+1},u_{1})\in A(D).$ If there exists $%
(u_{1},y_{q+3})\in A(D),$ then by the maximality of $q,$ $%
y_{q+3}=y_{l}=y_{0},$ a contradiction to $(\nabla ).$ So, there
exists $(y_{q+3},u_{1})\in A(D).$

If $(y_{q},y_{q+2})\in A(D),$ then there exists the directed cycle
\[
\overrightarrow{C}_{4}\cong (u_{1},y_{q},y_{q+2},y_{q+3},u_{1})
\]%
which is at most $2$-colored and therefore there exists
$u_{t+1}\leadsto _{k}u_{t}$ where $k\leq 3$ and with
$u_{t},u_{t+1}\in \{y_{q},y_{q+1},y_{q+2},y_{q+3}\}$, a
contradiction, $\gamma $ has a symmetric arc between $u_{t}$ and
$u_{t+1}.$ Hence $(y_{q+2},y_{q})\in A(D).$ \hfill $\bigtriangleup $

In brief, we have that
$(y_{q+1},u_{1}),(y_{q+3},u_{1}),(y_{q+2},y_{q})\in A(D)$ in the
directed path
\[
u_{1}\longrightarrow y_{q}\longrightarrow y_{q+1}\longrightarrow
y_{q+2}\longrightarrow y_{q+3}.
\]%
Observe that $u_{t}\in \{y_{q},y_{q+1}\}.$ If $y_{q+2}=u_{t+1}$, then $%
u_{t+1}=y_{q+2}\longrightarrow y_{q}\longrightarrow y_{q+1}$ is at most $3$%
-colored, a contradiction, $\gamma $ has a symmetric arc between
$u_{t}$ and
$u_{t+1}.$ We conclude that $y_{q+1}=u_{t}$ and either $y_{q+3}=u_{t+1}$ or $%
y_{q+3}=u_{0}.$ If $y_{q+3}=u_{t+1},$ then%
\[
u_{t+1}=y_{q+3}\longrightarrow u_{1}\longrightarrow
y_{q}\longrightarrow y_{q+1}
\]%
is at most $3$-colored, a contradiction, $\gamma $ has a symmetric
arc between $u_{t}$ and $u_{t+1}.$ Thus $y_{q+3}=u_{0}.$

By condition of the theorem, every $\overrightarrow{C}_{5}$
contained in $D$ is at most $3$-colored or every
$\overrightarrow{C}_{3}\uparrow
\overrightarrow{C}_{3}$ contained in $D$ is at most $2$-colored. If every $%
\overrightarrow{C}_{5}$ is at most $3$-colored, then
\[
\overrightarrow{C}_{5}\cong
(u_{1},y_{q},y_{q+1},y_{q+2},y_{q+3}=u_{0},u_{1})
\]%
is at most $3$-colored and consequently, there exists $u_{1}\leadsto
_{k}u_{0}$ at most $3$-colored, a contradiction, $\gamma $ has a
symmetric arc between $u_{0}$ and $u_{1}.$

If every $\overrightarrow{C}_{3}\uparrow \overrightarrow{C}_{3}$ is at most $%
2$-colored then the $\overrightarrow{C}_{3}\uparrow
\overrightarrow{C}_{3}$ induced by $\{u_{1},y_{q},y_{q+1},y_{q+2}\}$
is at most $2$-colored and there exists $u_{1}\leadsto _{k}u_{0}$ at
most $3$-colored, a contradiction, $\gamma $ has a symmetric arc
between $u_{0}$ and $u_{1}.$

The claim is proved. \hfill $\bigtriangleup $

To finish the proof of the theorem, we apply the Claim and consider
two cases:

\textsc{Case 1. }$(u_{1},y_{l-1})\in A(D).$ In this case the directed path $%
u_{1}\longrightarrow y_{l-1}\longrightarrow y_{0}\longrightarrow
y_{1}$ is at most $3$-colored and we know that $u_{0}\in
\{y_{0},y_{1}\}.$ We arrive
to a similar contradiction as before, $\gamma $ has a symmetric arc between $%
u_{0}$ and $u_{1}.$

\textsc{Case 2. }$(u_{1},y_{l-2})\in A(D).$ Observe that $y_{0}\neq
u_{0}$ and $y_{1}=u_{0},$ otherwise
\[
u_{1}\longrightarrow y_{l-2}\longrightarrow y_{l-1}\longrightarrow
y_{0}=u_{0}
\]%
is at most $3$-colored, an we have the contradiction of Case 1 once
more. So, we have the directed path
\[
u_{1}\longrightarrow y_{l-2}\longrightarrow y_{l-1}\longrightarrow
y_{0}\longrightarrow y_{1}=u_{0}.
\]%
By assumption $(\nabla ),$ the arc $(y_{0},u_{1})$ could belong to
$A(D).$
If that is the case, then the directed cycle%
\[
\overrightarrow{C}_{4}\cong (u_{1},y_{l-2},y_{l-1},y_{0},u_{1})
\]%
is at most $2$-colored by the condition of the theorem and therefore, $%
u_{1}\leadsto _{k}u_{0}$ at most $3$-colored, a contradiction,
$\gamma $ has
a symmetric arc between $u_{0}$ and $u_{1}.$ So, we suppose that $%
(y_{0},u_{1})\notin A(D).$ By $(\nabla )$, we have that
$(u_{1},y_{0})\notin
A(D)$ and then $u_{1},y_{0}\in \mathcal{A}$ and $u_{0}=y_{1}\notin \mathcal{A%
}$. Consequently, there exists an arc between $y_{1}=u_{0}$ and
$u_{1}.$ It is clear that $(u_{0},u_{1})\in A(D)$ (otherwise we have
a contradiction).

If every $\overrightarrow{C}_{5}$ is at most $3$-colored, then
\[
\overrightarrow{C}_{5}\cong
(u_{1},y_{l-2},y_{l-1},y_{0},y_{1}=u_{0},u_{1})
\]%
is at most $3$-colored and we arrive to a similar contradiction as
shown before. Hence, we can suppose that there exists a
$\overrightarrow{C}_{5}$
at least $4$-colored and thus we assume the condition that every $%
\overrightarrow{C}_{3}\uparrow \overrightarrow{C}_{3}$ is at most $2$%
-colored in $D.$ Notice that $y_{l-1}\in \mathcal{B\neq A}$ and then
there exists an arc between $y_{l-1}$ and $u_{1}.$ Since $l-2$ is
the maximum index such that $(u_{1},y_{l-2})\in A(D),$ we have that
$(y_{l-1},u_{1})\in A(D).$ Also $y_{l-2}\in \mathcal{C\notin
\{A},\mathcal{B\}}$ and then there exists an arc between $y_{0}$ and
$y_{l-2}$. If $(y_{0},y_{l-2})\in A(D),$
then the $\overrightarrow{C}_{3}\uparrow \overrightarrow{C}_{3}$ induced by $%
\{u_{1},y_{l-2},y_{l-1},y_{0}\}$ is at most $2$-colored. Hence there exists $%
u_{1}\leadsto _{k}u_{0}$ at most $3$-colored, a contradiction,
$\gamma $ has a symmetric arc between $u_{0}$ and $u_{1}.$ If
$(y_{l-2},y_{0})\in A(D),$ then
\[
u_{1}\longrightarrow y_{l-2}\longrightarrow y_{0}\longrightarrow
y_{1}=u_{0}
\]%
is at most $3$-colored, a contradiction, $\gamma $ has a symmetric
arc between $u_{0}$ and $u_{1}.$

The theorem is proved.
\end{proof}

\begin{theorem}

Let $D$ be an $m$-colored semicomplete $r$-partite digraph and
$k=2.$ If every $\overrightarrow{C}_{3}$ and
$\overrightarrow{C}_{4}$ contained in $D$ is monochromatic, then $D$
has a $2$-colored kernel. \label{k-equal-2}
\end{theorem}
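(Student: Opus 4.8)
The plan is to follow the common scheme set up in Section~\ref{begin-proof} and already carried out in the proofs of Theorems \ref{k-gr-equal-4} and \ref{k-equal-3}. By Remark \ref{closure} and Theorem \ref{Duchet} it suffices to show that every directed cycle of $\mathfrak{C}_{2}(D)$ has a symmetric arc, so I would suppose that $\gamma =(u_{0},u_{1},\ldots ,u_{p},u_{0})$ is a directed cycle of $\mathfrak{C}_{2}(D)$ with no symmetric arc. Instance (c) of Section~\ref{begin-proof} (which, using that every $\overrightarrow{C}_{3}$ and $\overrightarrow{C}_{4}$ of $D$ is monochromatic, invokes Lemma \ref{lemma-k-eq-2}) lets me replace each arc of $\gamma$ by an arc or a directed path of length $2$ of $D$, producing the closed walk $\delta$ and its closed subwalk $\varepsilon =(y_{0},y_{1},\ldots ,y_{l})$ with $l\geq 3$; recall that $(u_{1},y_{0})\notin A(D)$ by $(\nabla )$ and that $u_{0}\in \{y_{0},y_{1}\}$.

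First I would prove the same \textbf{Claim} as in Theorems \ref{k-gr-equal-4} and \ref{k-equal-3}, namely that $(u_{1},y_{j})\in A(D)$ for some $l-2\leq j\leq l-1$, by transcribing the argument of Theorem \ref{k-equal-3} with $k=2$. Letting $q$ be the largest index with $(u_{1},y_{q})\in A(D)$ and assuming $q\leq l-3$, I examine the directed path $u_{1}\to y_{q}\to y_{q+1}\to y_{q+2}$, decide whether $u_{1}$ and $y_{q+2}$ belong to the same partite set of $D$, and in each branch I locate either a chord contradicting the choice of $q$ or $(\nabla )$, or a directed cycle of length $3$ or $4$ of $D$ whose arc set contains an initial stretch of the directed path being traced. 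Since every $\overrightarrow{C}_{3}$ and $\overrightarrow{C}_{4}$ of $D$ is monochromatic, that stretch is monochromatic; hence each directed path that the proof of Theorem \ref{k-equal-3} bounds by ``at most $3$ colours'' is here bounded by ``at most $2$ colours'', and this produces a directed path using at most $2$ colours from $u_{t+1}$ to $u_{t}$ for some consecutive $u_{t},u_{t+1}$ of $\gamma$ (or from $u_{1}$ to $u_{0}$), contradicting the absence of symmetric arcs in $\gamma$. In particular, in the final situation of that argument, where $(y_{q+1},u_{1}),(y_{q+3},u_{1}),(y_{q+2},y_{q})\in A(D)$ and $y_{q+3}=u_{0}$, the two monochromatic directed triangles on $\{u_{1},y_{q},y_{q+1}\}$ and on $\{y_{q},y_{q+1},y_{q+2}\}$ force $(u_{1},y_{q})$, $(y_{q},y_{q+1})$ and $(y_{q+1},y_{q+2})$ to share a single colour, so $u_{1}\to y_{q}\to y_{q+1}\to y_{q+2}\to y_{q+3}=u_{0}$ uses at most $2$ colours; this is exactly the step where Theorem \ref{k-equal-3} needed a hypothesis on $\overrightarrow{C}_{5}$ or $\overrightarrow{C}_{3}\uparrow \overrightarrow{C}_{3}$, and the point of the present stronger hypothesis is that no such extra condition is required.

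With the Claim available, I would finish by cases on which of $(u_{1},y_{l-1})$ and $(u_{1},y_{l-2})$ is an arc of $D$, and on whether $u_{0}=y_{0}$ or $u_{0}=y_{1}$. If $(u_{1},y_{l-1})\in A(D)$ and $u_{0}=y_{0}$, then $u_{1}\to y_{l-1}\to y_{0}=u_{0}$ is already a directed path using at most $2$ colours; if $u_{0}=y_{1}$, I extend it by the arc $(y_{0},y_{1})$ and remove a colour using either the monochromatic triangle $(u_{1},y_{l-1},y_{0},u_{1})$ (when $(y_{0},u_{1})\in A(D)$) or, when $u_{1}$ and $y_{0}$ lie in the same partite set, the monochromatic quadrilateral $(u_{1},y_{l-1},y_{0},y_{1},u_{1})$, which is present because then $y_{1}$ lies outside that set, so there is an arc between $u_{1}$ and $y_{1}$, and $(u_{1},y_{1})\in A(D)$ would finish at once, hence $(y_{1},u_{1})\in A(D)$. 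If only $(u_{1},y_{l-2})\in A(D)$, the same idea is applied to $u_{1}\to y_{l-2}\to y_{l-1}\to y_{0}\,(\to y_{1})$: first, $(u_{1},y_{l-1})\notin A(D)$ together with the partite structure forces the arc $(y_{l-1},u_{1})$ and hence the monochromatic triangle $(u_{1},y_{l-2},y_{l-1},u_{1})$, so $(u_{1},y_{l-2})$ and $(y_{l-2},y_{l-1})$ share a colour; then a brief analysis of which partite sets contain $y_{l-2},y_{l-1},y_{0},y_{1},u_{1}$ yields in each branch either a shortcut arc, a monochromatic triangle among $\{y_{l-2},y_{l-1},y_{0}\}$, or a monochromatic quadrilateral through $u_{1}$, and in every case a directed path using at most $2$ colours from $u_{1}$ to $u_{0}$ --- the final contradiction.

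I expect the main obstacle to be exactly this last case, $(u_{1},y_{l-2})\in A(D)$ with $u_{0}=y_{1}$: there the natural walk from $u_{1}$ to $u_{0}$ has length $4$, and since no hypothesis beyond monochromatic triangles and quadrilaterals is available, one must exploit the partition classes carefully enough to force a short monochromatic cycle that trims the walk down to two colours. Everything else is a faithful copy of the proof of Theorem \ref{k-equal-3}, cheaper by one colour at every step because here $\overrightarrow{C}_{3}$ and $\overrightarrow{C}_{4}$ are monochromatic rather than merely at most $2$-coloured.
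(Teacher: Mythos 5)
Your treatment of the Claim is sound, and it is worth noting that it differs in organization from the paper's own argument (which establishes the Claim through three subclaims about which $y_{q+i}$ lie in $V(\gamma )$ and in which partite set): you instead transcribe the proof of Theorem \ref{k-equal-3} and patch the single step where that proof invoked the $\overrightarrow{C}_{5}$ / $\overrightarrow{C}_{3}\uparrow \overrightarrow{C}_{3}$ hypothesis. The patch is correct: the two monochromatic triangles $(u_{1},y_{q},y_{q+1},u_{1})$ and $(y_{q},y_{q+1},y_{q+2},y_{q})$ share the arc $(y_{q},y_{q+1})$, so the walk $u_{1}\rightarrow y_{q}\rightarrow y_{q+1}\rightarrow y_{q+2}\rightarrow y_{q+3}=u_{0}$ uses at most two colours, and the other ``length-three'' paths that Theorem \ref{k-equal-3} bounds by three colours are likewise trimmed to two by a monochromatic $\overrightarrow{C}_{3}$ or $\overrightarrow{C}_{4}$ sharing their initial arcs. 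This colour propagation through shared arcs is exactly the mechanism the paper uses.

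The genuine gap is in your endgame for the case $(u_{1},y_{l-2})\in A(D)$ (with $(u_{1},y_{l-1})\notin A(D)$), which you yourself flag as unresolved. You assert that ``the partite structure forces the arc $(y_{l-1},u_{1})$,'' but this needs $y_{l-1}\notin \mathcal{A}$ (the part of $u_{1}$), which you never establish; and the subsequent ``brief analysis of which partite sets contain $y_{l-2},y_{l-1},y_{0},y_{1},u_{1}$'' is a promise, not a proof. The paper closes this hole with an intermediate step you are missing: a second claim that $y_{0}\in \mathcal{A}$, proved from the Claim together with $(\nabla )$ --- if $y_{0}\notin \mathcal{A}$, then $(y_{0},u_{1})\in A(D)$, and the monochromatic $\overrightarrow{C}_{4}\cong (y_{0},u_{1},y_{l-2},y_{l-1},y_{0})$ or $\overrightarrow{C}_{3}\cong (y_{0},u_{1},y_{l-1},y_{0})$ already yields an at most $2$-coloured $u_{1}\leadsto u_{0}$. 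With $y_{0}\in \mathcal{A}$ one gets $y_{l-1}\notin \mathcal{A}$ (it dominates $y_{0}$ in $D$), hence $(y_{l-1},u_{1})\in A(D)$ by the maximality of $j$, the monochromatic triangle $(u_{1},y_{l-2},y_{l-1},u_{1})$, and then $y_{1}=u_{0}$; next one must decide the arc between $y_{l-2}$ and $y_{0}$: if $(y_{0},y_{l-2})\in A(D)$ the triangle $(y_{l-1},y_{0},y_{l-2},y_{l-1})$ shares $(y_{l-2},y_{l-1})$ with the previous one and trims the walk, while if $(y_{l-2},y_{0})\in A(D)$ one first forces $(u_{0},u_{1})\in A(D)$ and uses the monochromatic $\overrightarrow{C}_{4}\cong (u_{0},u_{1},y_{l-2},y_{0},u_{0})$. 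None of these steps is in your sketch, and they are not mere bookkeeping: without the auxiliary fact $y_{0}\in \mathcal{A}$ (or a direct treatment of the subcase $y_{l-1}\in \mathcal{A}$, which can in fact be handled via $(\nabla )$ and the monochromatic $\overrightarrow{C}_{4}\cong (u_{1},y_{l-2},y_{l-1},y_{0},u_{1})$), your final case does not go through as written.
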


\begin{proof}

In this case, we use instance (c) to assume that every arc of
$\gamma $ corresponds to an arc or a directed path of length $2$ in
$D.$

\textbf{Claim 1. }$(u_{1},y_{j})\in A(D)$ for some $l-2\leq j\leq
l-1.$

\textit{Proof of the Claim 1.} To prove the claim, suppose by
contradiction that $q$ is the maximum index such that
$(u_{1},y_{q})\in A(D)$ with $q\leq l-3.$ Without loss of
generality, suppose that $u_{1}\in \mathcal{A}$. We will need the
following three subclaims.

\textbf{Subclaim 1. }$y_{q+1}\in V(\gamma ).$

\textit{Proof of the Subclaim 1. }By contradiction, suppose that $%
y_{q+1}\notin V(\gamma )$ and consider the directed path%
\[
u_{1}\longrightarrow y_{q}\longrightarrow y_{q+1}\longrightarrow
y_{q+2}
\]%
($q+3\leq l$). Then $y_{q},y_{q+2}\in V(\gamma )$ and without loss
of generality, we can suppose that $y_{q}=u_{t}$ and
$y_{q+1}=u_{t+1}$ for some
$2\leq t\leq p-1$ and $u_{t+1}\neq u_{0}$ (otherwise there exists $%
u_{1}\leadsto _{k}u_{0}$ with $k\leq 2,$ a contradiction, $\gamma $
has a
symmetric arc between $u_{0}$ and $u_{1}).$ Observe that if $%
y_{q+2}=u_{t+1}\notin \mathcal{A}$, then there exists an arc between
$u_{1}$ and $y_{q+2}.$ By the maximality of $q,$ we have that
$(y_{q+2},u_{1})\in A(D)$ and then $u_{t}$ and $u_{t+1}$ are
contained in the monochromatic cycle $\overrightarrow{C}_{4}\cong
(u_{1},y_{q},y_{q+1},y_{q+2},u_{1})$ (by hypothesis). So, there
exists a monochromatic $u_{t+1}\leadsto u_{t},$ a contradiction,
$\gamma $ has a symmetric arc between $u_{t}$ and $u_{t+1}.$
Hence, $y_{q+2}=u_{t+1}\in \mathcal{A}$. We know that $y_{q}\notin \mathcal{A%
}$. Let us suppose that $y_{q}\in \mathcal{B}\neq \mathcal{A}$.
There exists an arc between $y_{q}$ and $y_{q+2}.$ If
$(y_{q+2},y_{q})=(u_{t+1},u_{t})\in
A(D),$ then we arrive to the same contradiction as before. Therefore $%
(y_{q},y_{q+2})=(u_{t},u_{t+1})\in A(D)$. As a consequence $y_{q+1}$
does not exist in $\varepsilon $ by the definition of $\gamma ,$ a
contradiction. \hfill $\bigtriangleup $

\textbf{Subclaim 2. }$y_{q+2}\notin V(\gamma ).$

\textit{Proof of the Subclaim 2. }By contradiction, suppose that
$y_{q+2}\in V(\gamma )$ and hence $y_{q+2}=u_{t+1}$ because $q+2\leq
l-1.$ So we have
the directed path%
\[
u_{1}\longrightarrow y_{q}\longrightarrow
y_{q+1}=u_{t}\longrightarrow y_{q+2}=u_{t+1}.
\]%
If $y_{q+2}=u_{t+1}\notin \mathcal{A}$, there exists the arc $%
(y_{q+2}=u_{t+1},u_{1})\in A(D)$ by the maximality of $q.$ But then
$u_{t}$ and $u_{t+1}$ are contained in a monochromatic cycle
\[
\overrightarrow{C}_{4}\cong
(y_{q+2}=u_{t+1},u_{1},y_{q},y_{q+1}=u_{t},y_{q+2}=u_{t+1}),
\]%
a contradiction, there exists a monochromatic $u_{t+1}\leadsto
u_{t}$ and a
symmetric arc between $u_{t}$ and $u_{t+1}$ in $\gamma .$ Therefore, $%
y_{q+2}=u_{t+1}\in \mathcal{A}$ and there exists an arc between $y_{q}$ and $%
y_{q+2}=u_{t+1}$ (recall that $y_{q}\notin \mathcal{A}$). If $%
(y_{q+2}=u_{t+1},y_{q})\in A(D),$ then $%
(y_{q+1}=u_{t},y_{q+2}=u_{t+1},y_{q},y_{q+1}=u_{t})$ is a monochromatic $%
\overrightarrow{C}_{3}$ by hypothesis and there exists a monochromatic $%
u_{t+1}\leadsto u_{t}$ and we get the same contradiction. Hence $%
(y_{q},y_{q+2}=u_{t+1})\in A(D).$ Consider the extended directed
path
\[
u_{1}\longrightarrow y_{q}\longrightarrow
y_{q+1}=u_{t}\longrightarrow y_{q+2}=u_{t+1}\longrightarrow y_{q+3},
\]%
where $y_{q+3}\notin \mathcal{A}$ (since $y_{q+2}=u_{t+1}\in
\mathcal{A}$).
Then, there exists an arc between $u_{1}$ and $y_{q+3}.$ If $%
(u_{1},y_{q+3})\in A(D),$ then $y_{q+3}=y_{l}=y_{0},$ a contradiction to $%
(\nabla ).$ Thus, $(y_{q+3},u_{1})\in A(D).$ Recall that $%
(y_{q},y_{q+2}=u_{t+1})\in A(D).$ Hence,
\[
\overrightarrow{C}_{4}\cong
(y_{q+2}=u_{t+1},y_{q+3},u_{1},y_{q},y_{q+2}=u_{t+1})
\]%
is monochromatic by hypothesis. So, there exists $u_{t+1}\leadsto
_{k}u_{t}$
with $k\leq 2,$ a contradiction, $\gamma $ has a symmetric arc between $%
u_{t} $ and $u_{t+1}.$ \hfill $\bigtriangleup $

As a consequence of Subclaim 2, we have that $y_{q+3}\in V(\gamma
).$

\textbf{Subclaim 3. }$y_{q+3}\in \mathcal{A}$.

\textit{Proof of the Subclaim 3. }By contradiction, suppose that $%
y_{q+3}\notin \mathcal{A}$. By Subclaim 1, we can suppose that $%
y_{q+1}=u_{t}\in V(\gamma ).$ Consider the directed path%
\[
u_{1}\longrightarrow y_{q}\longrightarrow
y_{q+1}=u_{t}\longrightarrow y_{q+2}\longrightarrow y_{q+3}
\]%
Then there exists an arc between $u_{1}$ and $y_{q+3}$ (recall that $%
u_{1}\in \mathcal{A}$). By the maximality of $q,$ we have that $%
(y_{q+3},u_{1})\in A(D).$ By Subclaim 2, $y_{q+2}\notin V(\gamma )$
and thus $y_{q+3}\in V(\gamma ).$ We consider two cases:

\textsc{Case 1.} $y_{q+2}\notin \mathcal{A}$. By the maximality of
$q,$
there exists $(y_{q+2},u_{1})\in A(D)$ and the directed cycle%
\[
\overrightarrow{C}_{4}\cong
(y_{q+2},u_{1},y_{q},y_{q+1}=u_{t},y_{q+2})
\]%
is monochromatic by hypothesis. If $y_{q+3}=u_{0},$ then there exists $%
u_{1}\leadsto _{k}u_{0}$ with $k\leq 2,$ a contradiction, $\gamma $
has a symmetric arc between $u_{0}$ and $u_{1}.$ If
$y_{q+3}=u_{t+1},$ then here exists $u_{t+1}\leadsto _{k}u_{t}$ with
$k\leq 2,$ a contradiction, $\gamma $ has a symmetric arc between
$u_{t}$ and $u_{t+1}.$

\textsc{Case 2.} $y_{q+2}\in \mathcal{A}$. Then $y_{q+1}\notin
\mathcal{A}$ and since $y_{q}\in \mathcal{B}$ (see the proof of
Subclaim 1), we have that
$y_{q+1}\notin \mathcal{B}$. Without loss of generality, suppose that $%
y_{q+1}\in \mathcal{C}$. By the maximality of $q,$ there exists $%
(y_{q+1},u_{1})\in A(D)$ and therefore the directed cycle
\begin{equation}
\overrightarrow{C}_{3}\cong (u_{1},y_{q},y_{q+1}=u_{t},u_{1})
\label{3-cycle}
\end{equation}%
is monochromatic. On the other hand, there exists an arc between
$y_{q}$ and
$y_{q+2}.$ If $(y_{q+2},y_{q})\in A(D),$ then the directed cycle $%
\overrightarrow{C}_{3}\cong (y_{q+2},y_{q},y_{q+1}=u_{t},y_{q+2})$
is
monochromatic and has the same color of the $\overrightarrow{C}_{3}$ of (\ref%
{3-cycle}) because they share the arc $(y_{q},y_{q+1}=u_{t})\in A(D).$ If $%
y_{q+3}=u_{0},$ then there exists $u_{1}\leadsto _{k}u_{0}$ with
$k\leq 2,$ a contradiction, $\gamma $ has a symmetric arc between
$u_{0}$ and $u_{1}.$
If $y_{q+3}=u_{t+1},$ then there exists $u_{t+1}\leadsto _{k}u_{t}$ with $%
k\leq 2,$ a contradiction, $\gamma $ has a symmetric arc between
$u_{t}$ and $u_{t+1}.$ So, we conclude that there exists
$(y_{q},y_{q+2})\in A(D).$ In
this case, the directed cycle%
\[
\overrightarrow{C}_{4}\cong (u_{1},y_{q},y_{q+2},y_{q+3},u_{1})
\]%
is monochromatic and of the same color as the $\overrightarrow{C}_{3}$ of (%
\ref{3-cycle}) because they share the arc $(u_{1},y_{q})\in A(D).$
Analogously, if $y_{q+3}=u_{0}$ or $y_{q+3}=u_{t+1},$ we arrive to
the same contradiction, $\gamma $ has a symmetric arc.

The subclaim follows. \hfill $\bigtriangleup $

Continuing with the proof of Claim 1, we have the following directed path%
\[
u_{1}\longrightarrow y_{q}\longrightarrow
y_{q+1}=u_{t}\longrightarrow y_{q+2}\longrightarrow y_{q+3},
\]%
where $u_{1},y_{q+3}\in \mathcal{A}$, $y_{q}\in \mathcal{B}$,
$y_{q+2}\notin \mathcal{A}$, $y_{q+2}\notin V(\gamma )$ and
$y_{q+3}\in V(\gamma )$ using Subclaims 1-3. By the maximality of
$q,$ there exists $(y_{q+2},u_{1})\in
A(D)$ creating the monochromatic directed cycle%
\begin{equation}
\overrightarrow{C}_{4}\cong
(y_{q+2},u_{1},y_{q},y_{q+1}=u_{t},y_{q+2}). \label{4-cycle}
\end{equation}%
Hence, there exists $u_{1}\leadsto _{k}y_{q+3}$ with $k\leq 2$ and
therefore, $y_{q+3}\neq u_{0}$ (otherwise, $\gamma $ has a symmetric
arc between $u_{0}$ and $u_{1}$) and then $y_{q+3}=u_{t+1}$ with
$q+3<l.$ Consider the extended directed path
\[
u_{1}\longrightarrow y_{q}\longrightarrow
y_{q+1}=u_{t}\longrightarrow y_{q+2}\longrightarrow
y_{q+3}=u_{t+1}\longrightarrow y_{q+4},
\]%
where $y_{q+4}\notin \mathcal{A}$ (since $y_{q+3}\in \mathcal{A}$).
Therefore there exists an arc between $u_{1}$ and $y_{q+4}.$ If $%
(u_{1},y_{q+4})\in A(D),$ then by the maximality of $q,$ we have that $%
y_{q+4}=y_{0},$ a contradiction to $(\nabla ).$ So there exists $%
(y_{q+4},u_{1})\in A(D).$

On the other hand, since $y_{q+3}=u_{t+1}\in \mathcal{A}$ and
$y_{q}\in
\mathcal{B}$, there exists an arc between $y_{q}$ and $y_{q+3}.$ If $%
(y_{q+3}=u_{t+1},y_{q})\in A(D),$ then $u_{t}$ and $u_{t+1}$ belong
to the monochromatic
\[
\overrightarrow{C}_{4}\cong
(y_{q},y_{q+1}=u_{t},y_{q+2},y_{q+3}=u_{t+1},y_{q})
\]%
and thus there exists a monochromatic $u_{t+1}\leadsto u_{t},$ a
contradiction, $\gamma $ has a symmetric arc between $u_{t}$ and
$u_{t+1}.$ Hence $(y_{q},y_{q+3}=u_{t+1})\in A(D)$ and we obtain the
monochromatic
directed cycle%
\[
\overrightarrow{C}_{4}\cong
(y_{q},y_{q+3}=u_{t+1},y_{q+4},u_{1},y_{q})
\]%
of the same color of the $\overrightarrow{C}_{4}$ of (\ref{4-cycle})
because they share the arc $(u_{1},y_{q})\in A(D).$ Thus, there
exists the
monochromatic $u_{t+1}=y_{q+3}\leadsto y_{q+1}=u_{t},$ a contradiction, $%
\gamma $ has a symmetric arc between $u_{t}$ and $u_{t+1}.$

Claim 1 is proved. \hfill $\bigtriangleup $

\textbf{Claim 2. }$y_{0}\in \mathcal{A}$.

\textit{Proof of Claim 2. }By contradiction, let us suppose that $%
y_{0}\notin \mathcal{A}$. Then there exists an arc between $u_{1}$ and $%
y_{0}.$ By $(\nabla ),$ there exists $(y_{0},u_{1})\in A(D).$ So
the directed cycle%
\[
\overrightarrow{C}_{4}\cong (y_{0},u_{1},y_{l-2},y_{l-1},y_{0})\text{ or }%
\overrightarrow{C}_{3}\cong (y_{0},u_{1},y_{l-1},y_{0})
\]%
is monochromatic and hence there exists $u_{1}\leadsto _{k}u_{0}$ with $%
k\leq 2$ (recall that $y_{0}=u_{0}$ or $y_{1}=u_{0}$). We arrive to
a contradiction, $\gamma $ has a symmetric arc between $u_{0}$ and
$u_{1},$ completing the proof of the claim. \hfill $\bigtriangleup $

To finish the proof of the theorem, we consider two cases according
to Claim 1.

\textsc{Case 1.} $j=l-2.$ By Claim 2, $y_{0}\in \mathcal{A}$ and then $%
y_{l-1}\notin \mathcal{A}$. So, there exists an arc between $y_{l-1}$ and $%
u_{1}.$ By the maximality of $j,$ there exists $(y_{l-1},u_{1})\in
A(D).$ Hence the directed cycle
\begin{equation}
\overrightarrow{C}_{3}\cong (u_{1},y_{l-2},y_{l-1},u_{1})
\label{3-cycle-2}
\end{equation}%
is monochromatic by hypothesis and then there exists $u_{1}\leadsto
_{k}y_{0} $ with $k\leq 2.$ If $y_{0}=u_{0},$ then we have a
contradiction, $\gamma $ has a symmetric arc between $u_{0}$ and
$u_{1}.$ Therefore $y_{1}=u_{0}.$ On
the other hand, there exists an arc between $y_{l-2}$ and $y_{0}$ since $%
y_{l-2}\notin \mathcal{A}$ and $y_{0}\in \mathcal{A}$. If $%
(y_{0},y_{l-2})\in A(D),$ then the directed cycle $\overrightarrow{C}%
_{3}\cong (y_{l-1},y_{0},y_{l-2},y_{l-1})$ is monochromatic and of
the same color as the $\overrightarrow{C}_{3}$ of (\ref{3-cycle-2}).
Thus, there
exists $u_{1}\leadsto _{k}u_{0}$ with $k\leq 2,$ particularly,%
\[
u_{1}\longrightarrow y_{l-2}\longrightarrow y_{l-1}\longrightarrow
y_{0}\longrightarrow y_{1}=u_{0},
\]%
we have the same contradiction as previously. In consequence, $%
(y_{l-2},y_{0})\in A(D).$ In addition, since $y_{0}\in \mathcal{A}$ and $%
y_{1}=u_{0}\notin \mathcal{A}$, there exists the arc
$(u_{0},u_{1})\in A(D)$ (otherwise, $(u_{1},u_{0})\in A(D)$ yielding
the same contradiction as
before). Then the directed cycle%
\[
\overrightarrow{C}_{4}\cong
(y_{1}=u_{0},u_{1},y_{l-2},y_{0},y_{1}=u_{0})
\]%
is monochromatic and there exists a monochromatic $u_{1}\leadsto
u_{0},$ the same contradiction once more.

\textsc{Case 2.} $j=l-1.$ In this case, there exists the directed path $%
u_{1}\longrightarrow y_{l-1}\longrightarrow y_{0}.$ So, $y_{0}\neq
u_{0}$,
otherwise we have a contradiction, $\gamma $ has a symmetric arc between $%
u_{0}$ and $u_{1}.$ Hence $y_{1}=u_{0}$ and since $y_{0}\in
\mathcal{A}$ by
Claim 2, $y_{1}=u_{0}\notin \mathcal{A}$. Thus, there exists an arc between $%
u_{0}$ and $u_{1}$ which should be $(u_{0},u_{1})\in A(D)$ (if not, $%
(u_{1},u_{0})\in A(D)$ and we have a contradiction). Therefore the
directed cycle
\[
\overrightarrow{C}_{4}\cong
(y_{1}=u_{0},u_{1},y_{l-1},y_{0},y_{1}=u_{0})
\]%
is monochromatic and there exists a monochromatic $u_{1}\leadsto
u_{0},$ the same contradiction once more.

This concludes the proof of the theorem.
\end{proof}

In a very similar way as the proofs of the above theorems, we can
show the following theorem for semicomplete bipartite digraphs.

\begin{theorem}

Let $D$ be an $m$-colored semicomplete bipartite digraph and $k=2$ (resp. $%
k=3$). If every $\overrightarrow{C}_{4}\upuparrows
\overrightarrow{C}_{4}$
contained in $D$ is at most $2$-colored (resp. $3$-colored), then $D$ has a $%
2$-colored (resp. $3$-colored) kernel. \label{bipart-k-equal-23}
\end{theorem}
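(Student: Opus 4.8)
The plan is to follow exactly the template set up in Section \ref{begin-proof} and refined in the proofs of Theorems \ref{k-gr-equal-4}--\ref{k-equal-2}, now invoking Lemma \ref{lemma-bipart} in place of Lemmas \ref{lemma-k-ge-4}, \ref{lemma-k-eq-3}, \ref{lemma-k-eq-2}. By Remark \ref{closure} and Theorem \ref{Duchet} it suffices to show that every directed cycle of $\mathfrak{C}_{k}(D)$ has a symmetric arc. So assume, for contradiction, that $\gamma=(u_{0},u_{1},\ldots,u_{p},u_{0})$ is a cycle in $\mathfrak{C}_{k}(D)$ with no symmetric arc; as before $p\geq 2$. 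Since $D$ is semicomplete bipartite and every $\overrightarrow{C}_{4}\upuparrows\overrightarrow{C}_{4}$ is at most $k$-colored, instance (d) applies and Lemma \ref{lemma-bipart} gives $d(x,y)\leq 2$ for every pair of consecutive vertices of $\gamma$. Hence each arc of $\gamma$ lifts to an arc or a directed path of length $2$ in $D$, and we form the closed walk $\delta$ and extract the closed subwalk $\varepsilon=(y_{0},y_{1},\ldots,y_{l})$ exactly as in Section \ref{begin-proof}, together with the normalisation $(\nabla)$: $(u_{1},y_{0})\notin A(D)$.

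The heart of the argument is again the \textbf{Claim} that $(u_{1},y_{j})\in A(D)$ for some $l-2\leq j\leq l-1$. To prove it, suppose $q\leq l-3$ is maximal with $(u_{1},y_{q})\in A(D)$ and run the same walk-chasing as in the $r$-partite case, \emph{but now exploiting bipartiteness}: in a bipartite digraph two vertices are adjacent iff they lie in different parts, so every time we produce a directed path of length $2$ or $3$ between two vertices we can decide adjacency purely by parity of position along $\delta$. Writing $D=(X\cup Z,A)$ with $u_{1}\in X$, I would track which part each $y_{q+i}$ lies in and repeatedly use the maximality of $q$ together with $(\nabla)$ to force the appropriate back-arcs $(y_{q+i},u_{1})\in A(D)$. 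The subdigraphs that close up are $\overrightarrow{C}_{4}$'s sharing an arc, i.e. copies of $\overrightarrow{C}_{4}\upuparrows\overrightarrow{C}_{4}$ (there are no odd cycles, so no $\overrightarrow{C}_{3}$ or $\overrightarrow{C}_{5}$ issues and the case analysis is actually shorter than for Theorem \ref{k-equal-3}); by hypothesis each such $\overrightarrow{C}_{4}\upuparrows\overrightarrow{C}_{4}$ is at most $k$-colored, so a back-path along two of its $\overrightarrow{C}_{4}$'s is at most $k$-colored, producing some $u_{t+1}\leadsto_{k'}u_{t}$ with $k'\leq k$ (using that the two $\overrightarrow{C}_{4}$'s share an arc, hence together use at most $k$ colors, and the back-path uses a subset of those arcs). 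This contradicts the assumed absence of a symmetric arc between $u_{t}$ and $u_{t+1}$ in $\gamma$, or between $u_{0}$ and $u_{1}$ when the relevant vertex is $u_{0}$. This proves the Claim.

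Finally, having the Claim, I would finish as in Theorem \ref{k-equal-3}: in Case $j=l-1$ the path $u_{1}\to y_{l-1}\to y_{0}\to y_{1}$ is at most $k$-colored (the two length-$2$ lifts meeting at $y_{l-1}$, resp. $y_0$, form a fragment of a $\overrightarrow{C}_{4}\upuparrows\overrightarrow{C}_{4}$ after closing with the back-arc), and since $u_{0}\in\{y_{0},y_{1}\}$ this gives $u_{1}\leadsto_{k'}u_{0}$ with $k'\leq k$, a contradiction; in Case $j=l-2$ the path $u_{1}\to y_{l-2}\to y_{l-1}\to y_{0}\to y_{1}$ together with the back-arc(s) forced by $(\nabla)$ and the maximality of $j$ closes into a $\overrightarrow{C}_{4}\upuparrows\overrightarrow{C}_{4}$, which is at most $k$-colored by hypothesis, again yielding $u_{1}\leadsto_{k'}u_{0}$ with $k'\leq k$ and the same contradiction. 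Since the argument is uniform in $k\in\{2,3\}$ (the only place $k$ enters is the colour bound on $\overrightarrow{C}_{4}\upuparrows\overrightarrow{C}_{4}$), both statements follow at once. The main obstacle I anticipate is purely bookkeeping: carefully justifying, at each back-arc, that the configuration assembled so far is genuinely a subdigraph isomorphic to (a fragment of) $\overrightarrow{C}_{4}\upuparrows\overrightarrow{C}_{4}$ — i.e. that the two $\overrightarrow{C}_{4}$'s share \emph{two consecutive} arcs in the relevant cases, or a single arc in the cases where the weaker "share an arc" suffices — and hence really is at most $k$-colored; once the geometry is pinned down the colour-counting and the contradictions are immediate, so the authors' remark that this is "very similar" to the previous proofs is accurate.
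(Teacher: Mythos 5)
Your skeleton is the right one, and it is indeed the one the paper has in mind (the paper itself only asserts that the proof is ``very similar'' to those of Theorems \ref{k-gr-equal-4}--\ref{k-equal-2}): Duchet's theorem plus Remark \ref{closure}, instance (d) with Lemma \ref{lemma-bipart}, the walk $\varepsilon$, the normalisation $(\nabla)$, the Claim that $(u_{1},y_{j})\in A(D)$ for some $l-2\leq j\leq l-1$, and the two final cases. The genuine gap is in the mechanism you propose for all the colour bounds. Under the hypothesis of Theorem \ref{bipart-k-equal-23} there is no assumption on individual $\overrightarrow{C}_{4}$'s at all (and no $\overrightarrow{C}_{3}$'s exist in a bipartite digraph), so two $\overrightarrow{C}_{4}$'s that share a \emph{single} arc carry no colour information whatsoever: only a genuine $\overrightarrow{C}_{4}\upuparrows\overrightarrow{C}_{4}$, i.e.\ two $\overrightarrow{C}_{4}$'s sharing \emph{two consecutive} arcs on five vertices, is at most $k$-colored. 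Your parenthetical ``the two $\overrightarrow{C}_{4}$'s share an arc, hence together use at most $k$ colors,'' and your later fallback ``or a single arc in the cases where the weaker share-an-arc suffices,'' are therefore unjustified; this is exactly what separates this theorem from Theorems \ref{k-equal-2} and \ref{k-equal-3}, where colour propagation across a shared arc is licensed by the monochromatic (resp.\ at most $2$-colored) hypothesis on each small cycle.

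Moreover, the configurations produced by mimicking the earlier walk-chase are \emph{not} automatically of the right shape, so this is not mere bookkeeping. For instance, in the Claim one naturally obtains, via maximality of $q$ and $(\nabla)$, the back-arcs $(y_{q+2},u_{1}),(y_{q+4},u_{1})\in A(D)$ and (after forcing $(y_{q},y_{q+3})$) the two cycles $(u_{1},y_{q},y_{q+1},y_{q+2},u_{1})$ and $(u_{1},y_{q},y_{q+3},y_{q+4},u_{1})$, which share only the arc $(u_{1},y_{q})$ and hence yield nothing; to proceed one must orient further chords --- e.g.\ the arc between $y_{q+1}$ and $y_{q+4}$, whose wrong orientation gives an immediate short back-path --- so as to exhibit two $\overrightarrow{C}_{4}$'s sharing two consecutive arcs (here $(y_{q+1},y_{q+4},u_{1},y_{q},y_{q+1})$ and $(y_{q+3},y_{q+4},u_{1},y_{q},y_{q+3})$ sharing $y_{q+4}\rightarrow u_{1}\rightarrow y_{q}$), and only then does the hypothesis bound the colours of the back-path. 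The same issue recurs in your final cases: in Case $j=l-2$ you must first force an arc such as $(y_{1},y_{l-2})$ to close the second $\overrightarrow{C}_{4}$ through $y_{l-2}\rightarrow y_{l-1}\rightarrow y_{0}$ before any colour count is available. Finally, the argument is not uniform in $k$: for $k=3$ every back-path of length at most $3$ is trivially at most $3$-colored, while for $k=2$ even those paths must be housed inside a $\overrightarrow{C}_{4}\upuparrows\overrightarrow{C}_{4}$, so the case analyses differ substantially (just as Theorems \ref{k-equal-2} and \ref{k-equal-3} required separate proofs). Until that forced-arc case analysis is actually carried out, with the shared-path (not shared-arc) configurations identified at each step, the proof is incomplete.
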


We summarize the known results on the existence of $k$-colored
kernels for $m$-colored semicomplete multipartite digraphs and
multipartite tournaments in the next two corollaries.

\begin{corollary}

Let $D$ be an $m$-colored semicomplete $r$-partite digraph and
$r\geq 2.$

\begin{enumerate}

\item[(i)] If $r\geq 3,$ $k=2$ and every $\overrightarrow{C}_{3}$ and $%
\overrightarrow{C}_{4}$ contained in $D$ is monochromatic, then $D$ has a $2$%
-colored kernel (Theorem \ref{k-equal-2}).

\item[(ii)] If $r\geq 3,$ $k=3$ and every $\overrightarrow{C}_{4}$
contained in $D$ is at most $2$-colored and, either every $\overrightarrow{C}%
_{5}$ contained in $D$ is at most $3$-colored or every $\overrightarrow{C}%
_{3}\uparrow \overrightarrow{C}_{3}$ contained in $D$ is at most $2$%
-colored, then $D$ has a $3$-colored kernel (Theorem
\ref{k-equal-3}).

\item[(iii)] If $r\geq 2$ and $k\geq 4,$ then $D$ has a $k$-colored kernel
(Theorem \ref{k-gr-equal-4} and Theorem 14 of \cite{GS-Ll-MB}).

\item[(iv)] If $r=2,$ $k=2$ (resp. $k=3)$ and every $\overrightarrow{C}%
_{4}\upuparrows \overrightarrow{C}_{4}$ contained in $D$ is at most $2$%
-colored, then $D$ has a $2$-colored (resp. $3$-colored) kernel (Theorem \ref%
{bipart-k-equal-23}).

\end{enumerate} \label{final-cor}

\end{corollary}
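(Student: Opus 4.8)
The plan is straightforward: this corollary is a bookkeeping assembly of the theorems already proved in this section together with one external result, so the proof amounts to checking that the hypotheses in each of the four cases match verbatim the hypotheses of the theorem being invoked, and that the four cases exhaust the admissible combinations of $r$ and $k$ claimed.

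First, for part (i) I would simply observe that the hypotheses---namely $r\geq 3$, $k=2$, and every $\overrightarrow{C}_{3}$ and $\overrightarrow{C}_{4}$ contained in $D$ monochromatic---are precisely those of Theorem \ref{k-equal-2}, so that theorem yields the desired $2$-colored kernel. In the same way, part (ii) is the exact statement of Theorem \ref{k-equal-3}, including its disjunctive coloring hypothesis on $\overrightarrow{C}_{5}$ and $\overrightarrow{C}_{3}\uparrow\overrightarrow{C}_{3}$, and part (iv) is the exact statement of Theorem \ref{bipart-k-equal-23} in the bipartite case $r=2$. In each of these three cases there is nothing to prove beyond citing the corresponding theorem.

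The only case requiring a genuine split is part (iii), covering $r\geq 2$ and $k\geq 4$. Here I would distinguish the two ranges of $r$. For $r\geq 3$ the conclusion is immediate from Theorem \ref{k-gr-equal-4}, which we established for semicomplete $r$-partite digraphs with $r\geq 3$ and which imposes no coloring restriction once $k\geq 4$. For the remaining value $r=2$, the theorems of the present paper do not apply directly, so I would instead recall from the introduction that every semicomplete bipartite digraph is $3$-quasi-transitive and invoke Theorem 14 of \cite{GS-Ll-MB}, which guarantees a $k$-colored kernel in every $m$-colored $3$-quasi-transitive digraph for $k\geq 4$. Combining the two subcases gives part (iii) for all $r\geq 2$.

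Since the substance lies entirely in the earlier theorems, the main (indeed only) point to verify with care is the bipartite $k\geq 4$ subcase of (iii): one must confirm that the external citation is legitimately applicable through the $3$-quasi-transitivity of semicomplete bipartite digraphs, precisely because the internal Theorem \ref{k-gr-equal-4} was proved under the standing assumption $r\geq 3$ and so does not by itself cover $r=2$.
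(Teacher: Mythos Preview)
Your proposal is correct and matches the paper's own treatment: the paper offers no proof of this corollary beyond the parenthetical citations to Theorems \ref{k-equal-2}, \ref{k-equal-3}, \ref{k-gr-equal-4}, \ref{bipart-k-equal-23} and Theorem 14 of \cite{GS-Ll-MB}, and your write-up simply spells out those citations. Your handling of part (iii)---splitting into $r\geq 3$ (internal Theorem \ref{k-gr-equal-4}) and $r=2$ (external result via $3$-quasi-transitivity of semicomplete bipartite digraphs)---is exactly the intended reading of the two references given there.
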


\begin{corollary}

Let $D$ be an $m$-colored $r$-partite tournament with $r\geq 2.$
Then the conclusions (i)--(iv) of Corollary \ref{final-cor} remain
valid. Moreover,

\begin{enumerate}

\item[(i)] if $r=2,$ $k=1$ and every $\overrightarrow{C}_{4}$ contained in $D
$ is monochromatic, then $D$ has a $1$-colored kernel (Theorem 2.1
of \cite{GS-RM1}), and

\item[(ii)] if $r\geq 3$, $k=1$ and every $\overrightarrow{C}_{3}$ and $%
\overrightarrow{C}_{4}$ contained in $D$ is monochromatic, then $D$ has a $1$%
-colored kernel (Theorem 3.3 of \cite{GS-RM2}).

\end{enumerate} \label{final-cor2}

\end{corollary}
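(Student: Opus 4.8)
The plan is to recognize that this corollary is an assembly of results already in hand, specialized to the subclass of multipartite tournaments, and so requires no new combinatorial argument. First I would record the elementary inclusion: an $r$-partite tournament is exactly a semicomplete $r$-partite digraph in which no symmetric arc occurs, hence it is in particular a semicomplete $r$-partite digraph. Consequently each hypothesis appearing in conclusions (i)--(iv) of Corollary \ref{final-cor} is meaningful for $D$, and whenever it holds the corresponding result applies verbatim: item (i) by Theorem \ref{k-equal-2}, item (ii) by Theorem \ref{k-equal-3}, item (iii) by Theorem \ref{k-gr-equal-4} together with Theorem 14 of \cite{GS-Ll-MB}, and item (iv) by Theorem \ref{bipart-k-equal-23}. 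This yields at once the first assertion, that (i)--(iv) remain valid, since the class of $r$-partite tournaments is contained in the class of semicomplete $r$-partite digraphs for which those theorems were established.

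For the two additional items, which treat the case $k=1$ of kernels by monochromatic directed paths, I would simply invoke the cited literature. When $r=2$, item (i) is precisely Theorem 2.1 of \cite{GS-RM1}: if every $\overrightarrow{C}_{4}$ of the bipartite tournament $D$ is monochromatic, then $D$ has a $1$-colored kernel. When $r\geq 3$, item (ii) is precisely Theorem 3.3 of \cite{GS-RM2}: if every $\overrightarrow{C}_{3}$ and every $\overrightarrow{C}_{4}$ of the $r$-partite tournament $D$ is monochromatic, then $D$ has a $1$-colored kernel. In each case the stated hypothesis is exactly the monochromaticity condition named in the corollary, so these items follow directly.

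The closest thing to an obstacle is purely bookkeeping: one must check that the colour conditions transcribe consistently between the two settings, in particular that the monochromatic-short-cycle hypotheses used in the $k=1$ theorems coincide with those named here, and that nothing invoking a symmetric arc (which cannot exist in a tournament) is tacitly required by the $k\geq 2$ theorems. Since those theorems were proved for arbitrary semicomplete $r$-partite digraphs, and a tournament only restricts that class further, their conclusions survive unchanged; the $k=1$ references were stated precisely for multipartite tournaments under the very same hypotheses. I would therefore present the proof as a single short paragraph asserting the class inclusion and listing the five internal theorems and two external references that jointly supply the seven cases, namely (i)--(iv) for $k\geq 2$ and (i)--(ii) for $k=1$.
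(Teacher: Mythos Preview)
Your proposal is correct and matches the paper's approach exactly: the paper offers no separate proof of this corollary, presenting it purely as a summary of the preceding results specialized to tournaments together with the two cited external theorems for $k=1$. Your observation that an $r$-partite tournament is a semicomplete $r$-partite digraph without symmetric arcs, so that Corollary~\ref{final-cor} applies verbatim, and that the $k=1$ items are direct quotations of \cite{GS-RM1} and \cite{GS-RM2}, is precisely the intended justification.
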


We conclude this paper with the following challenging conjecture. If
it were true, the resulting theorem would be a fine generalization
of Theorem 3.3 proved in \cite{GS-RM2}.

\begin{conjecture}

Let $D$ be an $m$-colored semicomplete $r$-partite digraph with
$r\geq 2.$ If every $\overrightarrow{C}_{3}$ and $%
\overrightarrow{C}_{4}$ contained in $D$ is monochromatic, then $D$ has a $1$%
-colored kernel.

\end{conjecture}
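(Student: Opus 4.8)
The plan is to follow the architecture of Theorems~\ref{k-gr-equal-4}--\ref{bipart-k-equal-23}. By Remark~\ref{closure} it suffices to exhibit a kernel of $\mathfrak{C}_{1}(D)$, and by Theorem~\ref{Duchet} this reduces to showing that every directed cycle of $\mathfrak{C}_{1}(D)$ has a symmetric arc. A preliminary observation is that symmetric arcs of $D$ are inherited by the closure: if $(u,v),(v,u)\in A(D)$, then each is a (length-one) monochromatic path, so $u$ and $v$ are joined by a symmetric arc in $\mathfrak{C}_{1}(D)$. Hence the only cycles that matter are those routed through the asymmetric part of $D$, and I would argue by contradiction exactly as in Section~\ref{begin-proof}, assuming a cycle $\gamma=(u_{0},\ldots ,u_{p},u_{0})$ in $\mathfrak{C}_{1}(D)$ with no symmetric arc.

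The pleasant point is that almost the whole apparatus already works at $k=1$. Every reverse path produced in the proof of Theorem~\ref{k-equal-2} and in the case analysis of Lemma~\ref{lemma-k-eq-2} is built by gluing monochromatic triangles and quadrilaterals along a \emph{common arc}: two monochromatic $\overrightarrow{C}_{3}$'s (or $\overrightarrow{C}_{4}$'s) sharing an arc must carry the same color, so the concatenation is genuinely monochromatic, not merely ``at most $2$-colored''. Consequently those arguments remain valid verbatim under the weaker hypothesis ``there is no monochromatic $y\leadsto x$'', that is, for $k=1$. The \emph{only} step in Lemma~\ref{lemma-k-eq-2} that truly exploits $k\ge 2$ is its appeal to Lemma~\ref{less-eq-4} to secure the preliminary bound $d(x,y)\le 4$. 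Thus the entire conjecture reduces to proving, for $k=1$, the distance lemma: if there is a monochromatic $x\leadsto y$ and no monochromatic $y\leadsto x$, then $d(x,y)\le 2$.

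For that lemma the short-distance cases still close. If $d(x,y)=3$, a shortest path $x\to x_{1}\to x_{2}\to y$ forces $(y,x_{1}),(x_{2},x)\in A(D)$, and the monochromatic triangles $(y,x_{1},x_{2},y)$ and $(x,x_{1},x_{2},x)$ share the arc $x_{1}\to x_{2}$, so $y\to x_{1}\to x_{2}\to x$ is monochromatic, a contradiction. If $d(x,y)=4$, a shortest path $x\to x_{1}\to x_{2}\to x_{3}\to y$ splits in two: when $x_{2}\in\mathcal{A}$ the monochromatic $\overrightarrow{C}_{4}$'s $(y,x_{1},x_{2},x_{3},y)$ and $(x,x_{1},x_{2},x_{3},x)$ share two arcs and again yield a monochromatic $y\leadsto x$; when $x_{2}\notin\mathcal{A}$ one first forces $(y,x_{2}),(x_{2},x)\in A(D)$, and here the $k=1$ proof must upgrade the ``$\le 2$-colored'' reverse walk of Lemma~\ref{less-eq-4} to a monochromatic one. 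This upgrade is exactly where the $\overrightarrow{C}_{4}$ hypothesis is indispensable: the monochromatic $\overrightarrow{C}_{4}$ $(x,x_{1},x_{2},x_{3},x)$ shares arcs with the monochromatic triangles $(x,x_{1},x_{2},x)$ and $(y,x_{2},x_{3},y)$, so $x_{2}\to x$ and $y\to x_{2}$ are pinned to its common color and $y\to x_{2}\to x$ is monochromatic. So the machinery survives at $k=1$ as soon as $d(x,y)\le 4$.

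The hard part—and, I suspect, the reason the statement is only conjectured—is the \emph{a priori} bound $d(x,y)\le 4$ itself. For $k\ge 2$ this is Lemma~\ref{less-eq-4}, whose proof manufactures a reverse walk of length $2$ and at once calls it ``at most $2$-colored''; that step is harmless only because $k\ge 2$ tolerates two colors. At $k=1$ the same walk need not be monochromatic, Lemma~\ref{less-eq-4} is unavailable, and long monochromatic $x\leadsto y$ paths are not obviously shortenable. The obstruction is a chord asymmetry: along a color-$c$ path, a \emph{backward} chord $z_{i+2}\to z_{i}$ is forced to color $c$ by the monochromatic triangle $(z_{i},z_{i+1},z_{i+2},z_{i})$, whereas a \emph{forward} chord $z_{i}\to z_{i+2}$ sits in a transitive (not directed) triangle and is subject to no color constraint, so forward shortcuts destroy monochromaticity. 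My plan to close the gap is to induct on the length of a shortest monochromatic $x\leadsto y$ path, combining the forced backward color-$c$ chords with the (so far unused) hypothesis of a monochromatic $x\leadsto y$ to produce either a strictly shorter monochromatic $x\leadsto y$ or a monochromatic $y\leadsto x$. Converting this heuristic into a clean distance bound is precisely where the genuine difficulty, and the resolution of the conjecture, resides.
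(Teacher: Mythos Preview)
The statement you are addressing is presented in the paper as an open \emph{conjecture}; the authors give no proof and explicitly call it ``challenging,'' remarking only that it would generalize Theorem~3.3 of \cite{GS-RM2} (which settles the tournament case). So there is no paper proof to compare against, and your proposal should be read for what it is: not a proof, but a diagnosis of where the $k=2$ machinery breaks when pushed down to $k=1$.

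That diagnosis is largely sound. The short-distance cases of Lemma~\ref{lemma-k-eq-2} do upgrade to $k=1$ exactly as you describe: for $d(x,y)=3$ the two monochromatic $\overrightarrow{C}_{3}$'s share the arc $(x_{1},x_{2})$, and for $d(x,y)=4$ with $x_{2}\in\mathcal{A}$ the two monochromatic $\overrightarrow{C}_{4}$'s share $(x_{1},x_{2})$ and $(x_{2},x_{3})$, so the reverse paths are genuinely monochromatic. You are also right that the only step in Lemma~\ref{lemma-k-eq-2} that invokes $k\ge 2$ in an essential way is its appeal to Lemma~\ref{less-eq-4}, whose length-$2$ reverse path $y\to x_{i}\to x$ carries no color control. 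Your closing paragraph concedes that you do not have a replacement for that bound; you call the proposed induction a ``heuristic'' and say its conversion into a proof ``is precisely where the genuine difficulty \ldots resides.'' That is an honest assessment, and it means your proposal, like the paper, leaves the conjecture open.

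One caution on the claim that ``almost the whole apparatus already works at $k=1$'': the proof of Theorem~\ref{k-equal-2} contains several places where the paper asserts only $u_{t+1}\leadsto_{k'}u_{t}$ with $k'\le 2$ (for instance in Subclaim~2 and in Case~1 of Subclaim~3), and upgrading each of these to a monochromatic path requires finding an extra shared-arc $\overrightarrow{C}_{3}$ or $\overrightarrow{C}_{4}$ that the paper did not need to exhibit. Many of these upgrades do go through, but a complete $k=1$ argument would have to verify every one. So even granting the missing distance lemma, the remainder is not quite ``verbatim'' but a careful case-by-case re-examination.
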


\noindent \textbf{References}


\begin{thebibliography}{99}
\bibitem{BJ-3-qt} J. Bang-Jensen, The structure of strong arc-locally
semicomplete digraphs, Discrete Math. 283 (2004), no. 1-3, 1-6.

\bibitem{BJ-G} J. Bang-Jensen and G. Gutin, Digraphs: Theory, algorithms and
applications. Second edition. Springer Monographs in Mathematics.
Springer-Verlag London, Ltd., London, 2009.

\bibitem{Duchet} P. Duchet, Graphes noyau-parfaits. (French) Combinatorics
79 (Proc. Colloq., Univ. Montr\'{e}al, Montreal, Que., 1979), Part
II. Ann. Discrete Math. 9 (1980), 93--101.

\bibitem{GS1} H. Galeana-S\'{a}nchez, On monochromatic paths and
monochromatic cycles in edge colored tournaments, Discrete Math. 156
(1996) 103-112.

\bibitem{GS2} H. Galeana-S\'{a}nchez, Kernels in edge-colored digraphs,
Discrete Math. 184 (1998) 87-99.

\bibitem{GS-Ll-MB2} H. Galeana-S\'{a}nchez, B. Llano, and J. J.
Montellano-Ballesteros, Kernels by monochromatic paths in m-colored
unions of quasi-transitive digraphs. Discrete Appl. Math. 158
(2010), no. 5, 461--466.

\bibitem{GS-Ll-MB} H. Galeana-S\'{a}nchez, B. Llano and J. J.
Montellano-Ballesteros, $k$-colored kernels (submitted,
arXiv:1201.2712).

\bibitem{GS-RM1} H. Galeana-S\'{a}nchez and R. Rojas-Monroy, On
monochromatic paths and monochromatic $4$-cycles in bipartite
tournaments, Discrete Math. 285 (2004), no. 1-3, 313-318.

\bibitem{GS-RM2} H. Galeana-S\'{a}nchez and R. Rojas-Monroy, Monochromatic
paths and monochromatic cycles in edge-colored $k$-partite
tournaments, Ars Combin. 97A (2010) 351-365.

\bibitem{MCh} L. A. Mart\'{\i}nez Chigo, Trayectorias monocrom\'{a}ticas en
digr\'{a}ficas $m$-coloreadas (Monochromatic paths in $m$-colored
digraphs), Bachelor Thesis tutored by J. J. Montellano-Ballesteros,
UNAM, 2010.

\bibitem{SSW} B. Sands, N. Sauer and R. Woodrow, On monochromatic paths in
edge-colored digraphs,\textit{\ }J. Combin. Theory Ser. B 33\textbf{\ }%
(1982) 271-275.

\bibitem{Ming} M. G. Shen, On monochromatic paths in $m$-colored tournaments,%
\textit{\ }J. Combin. Theory Ser. B 45 (1988) 108-111.

\bibitem{Wloch} I. W\l och, On kernels by monochromatic paths in the corona
of digraphs, Cent. Eur. J. Math.\textbf{\ }6 (2008), no. 4,
537--542.
\end{thebibliography}
\end{document}